\documentclass[11pt,reqno]{amsart}

\usepackage{amssymb}
\usepackage{amsfonts}
\usepackage{mathrsfs}
\usepackage{cite}

\hyphenation{mon-oid mon-oids} \hyphenation{ver-ti-ces}
\hyphenation{imm-er-sion imm-er-sions} \hyphenation{group-oid
group-oids} \hyphenation{in-duc-tive}

\newcommand{\rmv}[1]{}
\def\Gpi{\pv G_{\pi}}
\def\barGpi{\ov{\pv G}_{\pi}}
\def\Hop{B}
\def\CP{CP_{\pi}(T)}
\def\flag{\mathscr{F}(S)}
\def\flagc{\ov{\mathscr{F}}(S)}
\def\flaga{\mathscr{F}_{\pi}(S)}
\def\flagca{\ov{\mathscr{F}}_{\pi}(S)}

\def\wh{\widehat}

\def\pv#1{\ensuremath{{\mathbf{#1}}}}

\def\inv{^{-1}}
\def\p{\varphi}

\def\pinv{{\p \inv}}
\def\J{\mathrel{{\mathscr J}}} 

\def\R{\mathrel{{\mathscr R}}} 
\def\L{\mathrel{{\mathscr L}}} 
\def\H{\mathrel{{\mathscr H}}} 

\def\e<{\leq _{E}}

\def\ov#1{\ensuremath{\overline {#1}}}

\def\til#1{\ensuremath{\widetilde {#1}}}

\def\malce{\mathbin{\hbox{$\bigcirc$\rlap{\kern-8.75pt\raise0,50pt\hbox{$\mathtt{m}$}}}}}

\def\1sk{^{(1)}}

\def\to{\rightarrow}

\def\stab#1{\mathrm{St}_R(#1)}

\def\pl#1#2{\mathsf{PL}_{\pv {#1}}(#2)}
\def\red{\rho}
\def\bigwr{\check S^{\infty}}
\def\bigwrf{\check S^{\infty}_0}
\def\suppera{S^{\pi}}

%
\def\Thmname{Theorem}
\def\Propname{Proposition}
\def\Lemmaname{Lemma}
\def\Definitionname{Definition}
%
\newtheorem{Thm}{\Thmname}[section]
\newtheorem{Prop}[Thm]{\Propname}
\newtheorem{Lemma}[Thm]{\Lemmaname}

\newtheorem{Cor}[Thm]{Corollary}

\newtheorem{Claim}{Claim}
\newtheorem{Subclaim}{Subclaim}

\theoremstyle{remark}
\newtheorem{Example}[Thm]{Example}
\newtheorem{Rmk}[Thm]{Remark}
\theoremstyle{definition}
\newtheorem{Def}[Thm]{\Definitionname}
\numberwithin{equation}{section}

\title{Aperiodic Pointlikes and Beyond}

\author{Karsten Henckell\and John Rhodes\and Benjamin Steinberg}
\address{Department of Mathematics/Computer Science \\ New College of Florida
5800 Bay Shore Road Sarasota, Florida 34243-2109\\ \and
Department of Mathematics\\
University of California at Berkeley \\
Berkeley \\ CA 94720\\
USA\\ \and School of Mathematics and Statistics\\
Carleton University \\
1125 Colonel By Drive\\
Ottawa, Ontario  K1S 5B6 \\
Canada}
\thanks{The third author was supported in part by NSERC} \email{bsteinbg@math.carleton.ca}

\date{June 2, 2007}

\dedicatory{Dedicated to the memory of Bret Tilson}

\subjclass{20M07}

\begin{document}
\begin{abstract}
We prove that if $\pi$ is a recursive set of primes, then pointlike
sets are decidable for the pseudovariety of semigroups whose
subgroups are $\pi$-groups. In particular, when $\pi$ is the empty
set, we obtain Henckell's decidability of aperiodic pointlikes.  Our
proof, restricted to the case of aperiodic semigroups, is simpler
than the original proof.
\end{abstract}
\maketitle

\section{Introduction}
In~\cite{Henckell} the first author showed that aperiodic pointlikes
are computable;  the companion result for groups was proved by
Ash~\cite{Ash}.  Recently there has been renewed interest in the
decidability of aperiodic pointlikes: the third author used it to
compute certain joins~\cite{slice,slice2}; the authors have recently
used it to study aperiodic idempotent pointlikes and stable
pairs~\cite{Henckellidem,Henckellstable,ourstablepairs}. As a
consequence, the Mal'cev product $\pv V\malce \pv A$ is always
decidable if $\pv V$ is decidable and the semidirect product $\pv
V\ast \pv A$ is decidable so long as $\pv V$ is local and decidable.
The original proof of the decidability of aperiodic pointlikes
in~\cite{Henckell} is quite long.  The key complication is that the
aperiodic semigroup used to compute the pointlikes is given in terms
of generators of a transformation semigroup. To prove the semigroup
is aperiodic, the first author used a complicated Zeiger coding of
the Rhodes expansion to show that these generators live inside a
wreath product of aperiodic semigroups, and hence generate an
aperiodic semigroup. An alternate approach was given by the first
author in~\cite{productexp} involving a simpler coding into a wreath
product.

We prove here a considerable generalization of this result.  If
$\pi$ is a set of primes, let $\Gpi$ denote the pseudovariety of
groups with order divisible only by primes in $\pi$, that is, the
pseudovariety of $\pi$-groups. Then $\barGpi$ denotes the
pseudovariety of semigroups whose subgroups are $\pi$-groups. For
instance, when $\pi=\emptyset$, then $\barGpi$ is the pseudovariety
of aperiodic semigroups; if  \mbox{$\pi=\{p\}$}, then $\barGpi$ is
the pseudovariety of semigroups whose subgroups are $p$-groups.
Notice that $\barGpi$ has decidable membership if and only if $\pi$
is recursive.  We prove in this case that $\barGpi$ has decidable
pointlikes.  Our construction is inspired by Henckell's
proof~\cite{Henckell}, but we sidestep the Zeiger coding by working
directly with $\L$-chains.

 The paper is organized as follows. Given a
finite semigroup $T$, we first introduce a certain computable
semigroup $\CP$ of $\barGpi$-pointlikes. Then we discuss
Sch\"utzenberger groups and the notion of a $\pi'$-free element. In
the following section, we show how to associate a finite semigroup
$S^{\pi}\in \barGpi$ to any finite semigroup $S$. By working with an
arbitrary semigroup and axiomatizing the essential properties of
Henckell's original construction, we manage to simplify Henckell's
proof scheme.  We draw inspiration from the Grigorchuk school's
theory of self-similar (or automaton) groups~\cite{GNS,selfsimilar}.
The subsequent section shows how to construct a relational morphism
from $T$ to $\CP^{\pi}$ such that the inverse image of each element
belongs to $\CP$. The construction assumes the existence of a blowup
operator on $\CP$, the existence of which is established in the
final section.  This last bit again simplifies the corresponding
construction in~\cite{Henckell}.

\section{Pointlikes}
As usual, if $S$ is a finite semigroup, then $s^{\omega}$ denotes as
usual the unique idempotent power of $s$. The notation $S^I$ stands
for $S$ with an adjoined identity $I$.   The reader is referred
to~\cite{Almeida:book,CP,Arbib,Eilenberg,qtheor} for background and
undefined terminology concerning finite semigroups.

\begin{Def}[$\pv V$-pointlikes]
Let $\pv V$ be a pseudovariety of semigroups and $T$ a semigroup.  A
subset $Z\subseteq T$ is said to be \emph{$\pv V$-pointlike} if, for
all relational morphisms $\p:T\to S$ with $S\in \pv V$, there exists
$s\in S$ such that $Z\subseteq s\pinv$.
\end{Def}

The collection $\pl V T$ of $\pv V$-pointlikes of $T$ is a
subsemigroup of the power set $P(T)$, containing the singletons, and
which is a downset for the order $\subseteq$.  One says that $\pv V$
has decidable pointlikes if one can effectively compute $\pl V T$
from the multiplication table of $T$.
See~\cite{Almeidahyp,Henckell,slice,slice2,delay,qtheor} for more on
pointlikes. If $Z\in P(T)$, let us define $Z^{\omega+\ast} =
Z^{\omega}\bigcup_{n\geq 1} Z^n$. Since products distribute over
union in $P(T)$, it follows easily that
\begin{equation}\label{eq:omega+*}
ZZ^{\omega+\ast}=Z^{\omega+\ast}=Z^{\omega+\ast}Z.
\end{equation}
One deduces immediately from \eqref{eq:omega+*} that
$Z^{\omega+\ast}$ is an idempotent. Observe that if $Z$ is a group
element, then $Z^{\omega+\ast}=\bigcup_{n\geq 1} Z^n$.

Let $\pi$ be a set of primes; then $\pi'$ denotes the set of primes
not belonging to $\pi$. Denote by $\Gpi$ the pseudovariety of
$\pi$-groups, that is, groups whose orders only involve primes from
$\pi$. Let $\barGpi$ be the pseudovariety of semigroups whose
subgroups are $\pi$-groups. As mentioned in the introduction, if
$\pi = \emptyset$, then $\Gpi$ is the trivial pseudovariety and
$\barGpi$ is the pseudovariety of aperiodic semigroups; if
$\pi=\{p\}$, then $\Gpi$ is the pseudovariety of $p$-groups. Notice
that the membership problems for $\pi$, $\Gpi$ and $\barGpi$ are
equivalent.

The following proposition shows that the semigroup of
$\barGpi$-pointlikes is closed under unioning up cyclic
$\pi'$-subgroups.  If $\pi$ is empty, this means one can union up
any cyclic subgroup, as was observed by Henckell~\cite{Henckell}. In
fact, $\pl A T$ is closed under the operation $Z\mapsto
Z^{\omega+\ast}$.

\begin{Prop}[Cyclic amalgamation]\label{closedunderomega+star}
Let $\pi$ be a set of primes and $T$ a finite semigroup.  Suppose
$Z\in \pl {\barGpi} T$ generates a cyclic $\pi'$-group. Then
$Z^{\omega+\ast}\in \pl {\barGpi} T$.
\end{Prop}
\begin{proof}
Suppose the group element $Z$ has order $k$. Let $\p:T\to S$ be a
relational morphism with $S\in \barGpi$. Choose $s\in S$ with
$Z\subseteq s\pinv$.  Then $Z=Z^{\omega}Z\subseteq
s^{\omega}s\pinv$.  So without loss of generality, we may assume
that $s$ is a group element.  The order $n$ of $s$ must be prime to
$k$, so we can find an positive integer $m$ with $mn\equiv 1\bmod
k$.  Then $Z = Z^{mn}\subseteq s^{mn}\pinv = s^{\omega}\pinv$.  Thus
$Z^r\subseteq s^{\omega}\pinv$ for all $r>0$ and so
$Z^{\omega+\ast}\subseteq s^{\omega}\pinv$. We conclude
$Z^{\omega+\ast}\in \pl {\barGpi} T$.
\end{proof}

This paper is devoted to proving the following generalization of
Henckell's theorem describing the $\pv A$-pointlike
sets~\cite{Henckell}.

\begin{Thm}\label{henckellmain}
Let $\pi$ be a set of primes and $T$ be a finite semigroup. Denote
by $\CP$ the smallest subsemigroup of $P(T)$ containing the
singletons and closed under $Z\mapsto Z^{\omega+\ast}$ whenever $Z$
generates a cyclic $\pi'$-group. Then $\pl {\barGpi} T$ consists of
all $X\in P(T)$ with $X\subseteq Y$ some $Y\in \CP$.
\end{Thm}

Proposition~\ref{closedunderomega+star} shows that $\CP\subseteq \pl
{\barGpi} T$ and hence each of the subsets described in
Theorem~\ref{henckellmain} is indeed $\barGpi$-pointlike.  The hard
part of the result is proving the converse.

\begin{Cor}
Let $\pi$ be a recursive set of prime numbers.  Then
$\barGpi$-pointlikes are decidable.
\end{Cor}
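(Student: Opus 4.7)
The plan is to apply Theorem~\ref{henckellmain} and reduce the decidability of $\barGpi$-pointlikes to the effective computability of the finite semigroup $\CP\subseteq P(T)$. Once $\CP$ is known, a subset $X\subseteq T$ is $\barGpi$-pointlike if and only if $X\subseteq Y$ for some $Y\in \CP$, which can be tested by finitely many set inclusions, so everything comes down to producing $\CP$.

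To compute $\CP$, I would run a naive saturation (closure) procedure inside the finite set $P(T)$. Start with the collection of singletons $\{\{t\}\mid t\in T\}$ and repeatedly extend by (i) adding all products $XY$ of elements already present and (ii) adding $Z^{\omega+\ast}$ whenever $Z$ is an element already present that generates a cyclic $\pi'$-group in $P(T)$. This produces an ascending chain in the finite lattice $P(P(T))$, so it stabilizes after at most $|P(P(T))|$ steps, and the resulting fixed point is exactly $\CP$ by definition. The semigroup product on $P(T)$ is computable directly from the multiplication table of $T$, and for any $Z\in P(T)$ one computes $Z^{\omega+\ast}$ by forming the (finite) sequence of powers $Z, Z^2, \ldots$ in $P(T)$ until a repeat occurs and then taking the union of all powers from $Z^{\omega}$ onward together with $Z$ itself.

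The one step that uses the standing hypothesis on $\pi$ is the test in (ii): given $Z\in P(T)$, decide whether $Z$ generates a cyclic $\pi'$-group. Enumerating $Z, Z^2, \ldots$ until repetition yields the index $m$ and period $r$ of $Z$ in $P(T)$; then $Z$ generates a cyclic group iff $m=1$, in which case $\langle Z\rangle$ has order $r$ and identity $Z^{\omega}=Z^r$. It remains to decide whether $r$ is a $\pi'$-number, i.e. whether no prime divisor of $r$ lies in $\pi$. Since $r$ is a concrete positive integer bounded by $|P(T)|$, we can enumerate its (finitely many) prime divisors and, using the recursive enumeration of $\pi$, check each one for membership; this is precisely where recursiveness of $\pi$ enters. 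There is no serious obstacle in any of this — the corollary is essentially packaging: Theorem~\ref{henckellmain} plus the finiteness of $P(T)$ plus one elementary primality test — so the mathematical weight of the result rests entirely on Theorem~\ref{henckellmain}.
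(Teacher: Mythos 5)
Your proposal is correct and is exactly the (implicit) argument the paper intends: Theorem~\ref{henckellmain} reduces the decision problem to computing the finite subsemigroup $\CP\subseteq P(T)$, and $\CP$ is computable by the saturation you describe, with recursiveness of $\pi$ used only to test whether the order of a cyclic group element of $P(T)$ is a $\pi'$-number. One tiny terminological slip: you write ``using the recursive enumeration of $\pi$,'' but recursive \emph{enumerability} alone would not let you certify that a prime divisor of $r$ is \emph{not} in $\pi$; what you actually invoke (and what the hypothesis supplies) is that $\pi$ is \emph{recursive}, i.e.\ membership in $\pi$ is decidable, which is precisely what is needed to decide whether a given prime lies in $\pi$ or in $\pi'$.
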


In~\cite{Henckellidem,ourstablepairs} it is shown that if $\pv V$ is
a pseudovariety such that $\pv A\malce \pv V=\pv V$ and  $\pv
V$-pointlikes are decidable, then the $\pv V$-idempotent pointlikes
are decidable.  This in particular applies to pseudovarieties of the
form $\barGpi$.

\begin{Cor}
Let $\pi$ be a recursive set of prime numbers.  Then
$\barGpi$-idempotent pointlikes are decidable and hence the Mal'cev
product $\pv V\malce \barGpi$ is decidable whenever $\pv V$ is
decidable.
\end{Cor}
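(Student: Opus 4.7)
The plan is to deduce both halves of the corollary from the previous corollary and two general results already cited above, the only non-formal step being to verify the identity $\pv A\malce \barGpi = \barGpi$.

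For the first half, the theorem from~\cite{Henckellidem,ourstablepairs} recalled in the paragraph just above the corollary states that any pseudovariety $\pv V$ with $\pv A\malce \pv V = \pv V$ and decidable pointlikes has decidable idempotent pointlikes; since the previous corollary gives decidability of $\barGpi$-pointlikes, it suffices to establish $\pv A\malce \barGpi = \barGpi$ in order to obtain decidability of $\barGpi$-idempotent pointlikes. The second half follows from the standard principle that $S \in \pv V\malce \pv W$ if and only if every subsemigroup of $P(S)$ generated by an idempotent $\pv W$-pointlike of $S$ lies in $\pv V$: decidability of the $\pv W$-idempotent pointlikes lets one enumerate them, and decidability of $\pv V$ lets one test the generated subsemigroups, so $\pv V\malce \pv W$ is decidable. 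Specialized to $\pv W = \barGpi$, this yields the second assertion.

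To verify $\pv A\malce \barGpi = \barGpi$, the inclusion $\barGpi \subseteq \pv A\malce \barGpi$ is immediate from the identity relational morphism. For the reverse, let $\p : S \to T$ be a relational morphism with $T \in \barGpi$ and $e\pinv$ aperiodic for every idempotent $e \in T$. A standard derived-category argument shows that every subgroup $G$ of $S$ then divides a subgroup of $T$: if $f$ denotes the identity of $G$ and $e$ is any idempotent of the subsemigroup $f\p\subseteq T$, then sending $g\in G$ to $e\cdot g\p\cdot e \cap H_e$ defines a relational morphism from $G$ into the maximal subgroup $H_e$ of $T$ at $e$ whose fibre over the identity $e$ is contained in the aperiodic semigroup $e\pinv$; since $G$ is a group, this forces the relational morphism to be a division. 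Closure of $\pi$-groups under division, together with the hypothesis that the subgroups of $T$ are $\pi$-groups, then gives $G \in \Gpi$, whence $S \in \barGpi$. This division-of-subgroups step is where any technical care is needed, but it is a well-known structural fact about $\pv A\malce$-relational morphisms; everything else in the plan is purely formal.
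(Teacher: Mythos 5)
Your overall route is the same as the paper's: the corollary is deduced from the preceding paragraph (the result of \cite{Henckellidem,ourstablepairs} that decidable $\pv V$-pointlikes plus $\pv A\malce\pv V=\pv V$ gives decidable $\pv V$-idempotent pointlikes), the prior corollary on decidability of $\barGpi$-pointlikes, the observation that $\pv A\malce\barGpi=\barGpi$, and the standard reduction of Mal'cev-product membership to idempotent pointlikes. The paper treats all of this as immediate and offers no further argument, so your filling in the verification of $\pv A\malce\barGpi=\barGpi$ is the only place where you go beyond what the paper writes.

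That said, the specific construction you propose for the division $G\prec H_e$ has a gap. You define $\psi(g)=e\cdot g\p\cdot e\cap H_e$ and assert that the fibre $\psi\inv(e)$ is contained in $e\pinv$. But $g\in\psi\inv(e)$ only says that \emph{some} $t\in g\p$ satisfies $ete=e$; it does not say $e\in g\p$, so $g\in e\pinv$ does not follow, and the aperiodicity of $e\pinv$ cannot be invoked directly. (It is also not immediate that $e\cdot g\p\cdot e\cap H_e$ is nonempty for every $g$, which is needed for $\psi$ to be fully defined.) The standard way to make this step precise is not via your explicit formula but via the minimal-cover argument: take a $\subseteq$-minimal subsemigroup $R'$ of the graph of $\p$ restricted to $G\times T$ that still projects onto $G$; then $R'$ is a group sitting inside $G\times H_e$ for some idempotent $e$, and the kernel of the second projection $R'\to H_e$ is contained in $(e\pinv\cap G)\times\{e\}$, hence is trivial by aperiodicity of $e\pinv$, so $R'\hookrightarrow H_e$ and $G$ is a quotient of a subgroup of $H_e$. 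You correctly flag that this is the one step requiring technical care and that the fact is well known, so this is a repairable imprecision rather than a wrong approach; but as written, the claimed containment $\psi\inv(e)\subseteq e\pinv$ is not justified.

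Finally, a minor wording slip: the criterion for $T\in\pv V\malce\pv W$ is that every idempotent $\pv W$-pointlike subset of $T$ (itself a subsemigroup of $T$, since it is idempotent in $P(T)$) belongs to $\pv V$, not that certain subsemigroups of $P(T)$ belong to $\pv V$.
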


\section{Sch\"utzenberger groups and $\pi'$-free
elements}\label{schutzsec} Fix a semigroup $S$. Let $H$ be an
$\H$-class of $S$ and set \[\stab H = \{s\in S^I\mid Hs\subseteq
H\}.\]  The faithful quotient transformation monoid, denoted
$(H,\Gamma_R(H))$, is a transitive regular permutation group called
the \emph{Sch\"utzenberger group} of $H$~\cite{CP,Arbib}.  If $H$ is
a maximal subgroup, then $\Gamma_R(H)=H$. In general, one can always
find a subgroup $\til \Gamma_R(H)\subseteq \stab H$ acting
transitively on $H$ with faithful quotient
$\Gamma_R(H)$~\cite{Arbib}.  The following proposition is
well-known.

\begin{Prop}\label{L-classindep}
Let $H$ and $H'$ be $\L$-equivalent $\H$-classes of $S$.  Then
$\stab H=\stab {H'}$ and one can take $\til \Gamma_R(H)= \til
\Gamma_R(H')$. Moreover, the kernels of the natural maps
$\til\Gamma_R(H)\to \Gamma_R(H)$ and \mbox{$\til\Gamma_R(H)\to
\Gamma_R(H')$} coincide. In particular, $\Gamma_R(H)\cong
\Gamma_R(H')$.
\end{Prop}
\begin{proof}
Suppose that $H=H_a$, $H'=H_b$ and $ya=b$ with $y\in S^I$. By
Green's lemma, $yH_a=H_b$.  The equality $\stab H=\stab {H'}$ is
classical~\cite{CP,Arbib}. Now $H_a=a\til \Gamma_R(H)$, so $b\til
\Gamma_R(H) = ya\til \Gamma_R(H) = yH_a=H_b$.  Thus $\til
\Gamma_R(H)$ is a group in $\stab {H'}$ acting transitively on $H$.
By regularity of the action we can take $\til\Gamma_R(H') = \til
\Gamma_R(H)$. The statement about kernels follows since the right
stabilizer of any two $\L$-equivalent elements of a semigroup
coincide.
\end{proof}

Similarly, there is a left Sch\"utzenberger group $(\Gamma_L(H),H)$
and a subgroup $\til \Gamma_L(H)$ of the left stabilizer of $H$
mapping onto $\Gamma_L(H)$.  The groups $\Gamma_L(H)$ and
$\Gamma_R(H)$ are isomorphic.  In fact, if $h_0\in H$ is a fixed
base point and $g\in \Gamma_R(H)$, then the map $\gamma$ sending $g$
to the unique $g\gamma\in \Gamma_L(H)$ with $g\gamma h_0 = h_0 g$ is
an anti-isomorphism.  In particular, using
Proposition~\ref{L-classindep} and its dual, we see that the
Sch\"utzenberger group depends up to isomorphism only on the
$\J$-class.  See~\cite{CP,Arbib} for details. The following
proposition describes when an element belongs to $\stab H$, and
hence represents an element of $\Gamma_R(H)$.

\begin{Prop}\label{stayinschutz}
Let $H$ be an $\H$-class of $S$.  Then $s\in S^I$ belongs to $\stab
H$ if and only if, for some $h\in H$, $hs\in H$.
\end{Prop}
\begin{proof}
Necessity is clear.  Suppose $hs\in H$ and $h'\in H$.  Then we have
$h's\L hs\L h'$.  Therefore, $h's\J h'$ and so $h's\R h'$.  This
shows $h's\in H$.
\end{proof}

We now introduce the important notion of $\pi'$-freeness.

\begin{Def}[$\pi'$-free]\label{definepiprimefree}
Let $\pi$ be a set of primes. A $\J$-class (respectively, $\L$-,
$\R$-class) of $S$ is called \emph{$\pi'$-free} if its
Schutzenberger group is a $\pi$-group.  Likewise, an element of a
$\pi'$-free $\J$-class is called $\pi'$-free.
\end{Def}

We shall need the following well-known and easy to prove lemma.
\begin{Lemma}\label{primelift}
Let $\p:G\to H$ be an onto group homomorphism and let $h\in H$ have
prime order $p$.  There there is an element $g\in G$ of prime power
order $p^n$ with $g\p=h$.
\end{Lemma}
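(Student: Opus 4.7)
The plan is to produce such a $g$ as a suitable power of any lift of $h$. Pick an arbitrary preimage $g_0\in G$ with $g_0\p = h$; since $\p$ is a homomorphism, the order of $h$ divides the order of $g_0$, so the order $N$ of $g_0$ can be written $N = p^n m$ with $n\geq 1$ and $\gcd(p,m)=1$.

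I would then replace $g_0$ by an appropriate power to kill off the $p'$-part while keeping the image equal to $h$. Since $\gcd(m,p)=1$, choose an integer $j$ with $mj\equiv 1 \pmod p$. Set $g := g_0^{mj}$. Then $g^{p^n} = g_0^{p^n m j} = (g_0^{N})^j = 1$, so the order of $g$ divides $p^n$; and $g\cdot\p = h^{mj} = h$ because $mj\equiv 1 \pmod p$ and $h$ has order $p$. Since $g\p=h\neq 1$, we have $g\neq 1$, so the order of $g$ is a nontrivial power of $p$.

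There is essentially no obstacle: the only thing to verify carefully is that the chosen exponent $mj$ simultaneously eliminates the $p'$-torsion of $g_0$ and fixes $h$ in the quotient, and both conditions reduce to the single fact that $m$ is a unit modulo $p$. A Sylow-theoretic argument (take a Sylow $p$-subgroup of $\p\inv(\langle h\rangle)$ and show that it still surjects onto $\langle h\rangle$) would give an alternative proof, but the elementary power-of-a-lift argument above is shorter and self-contained.
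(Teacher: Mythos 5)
Your argument is correct and complete. The paper itself omits the proof, labeling the lemma ``well-known and easy to prove,'' so there is no proof in the paper to compare against; but your power-of-a-lift computation (choosing $j$ with $mj\equiv 1\pmod p$ and taking $g=g_0^{mj}$) is a clean, self-contained verification, and the check that $g\neq 1$ correctly rules out the trivial power.
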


\section{A $\barGpi$-variant of the Rhodes expansion}
Our goal in this section is to associate a finite semigroup
$\suppera\in\barGpi$ to each finite semigroup $S$.  The case of
$\CP$ will yield a semigroup in $\barGpi$ and a relational morphism
that establishes Theorem~\ref{henckellmain}.

Fix a finite semigroup $S$ for this section.
 Elements of the free monoid $S^*$ will be written as
strings $\vec{x}=(x_n,x_{n-1},\ldots,x_1)$. The empty string is
denoted $\varepsilon$.  We omit parentheses for strings of length
$1$.  If $n\geq \ell$,  define
\begin{equation*}\label{defalpha}
(x_n,x_{n-1},\ldots,x_1)\alpha_{\ell} =
(x_{\ell},x_{\ell-1},\ldots,x_1)
\end{equation*}
and $(x_n,\ldots,x_1)\tau_{\ell} = x_{\ell}$. We identify $\vec
x\alpha_1$ with $\vec x\tau_1$, the first letter of $\vec x$.  By
convention $\vec x\alpha_0=\varepsilon$.   Set
$(x_n,\ldots,x_1)\omega=x_n$. We use $\vec b\cdot\vec a$ for the
concatenation of $\vec b$ and $\vec a$. As the notation suggests, we
 read strings from right to left.

  If $P$ is a pre-ordered set,
then a \emph{flag} of elements of $P$ is a strict chain
$p_n<p_{n-1}<\cdots<p_1$. We also allow an empty flag.  Denote by
$\flag$ the set of flags for the $\L$-order on $S$.  Of course,
$\flag$  is a finite set.  A typical flag
$s_n<_{\L}s_{n-1}<_{\L}\cdots <_{\L} s_1$ shall be denoted
$(s_n,s_{n-1},\cdots,s_1)$.  We shall also consider the set $\flagc$
of $\L$-chains, that is, all strings $(s_n,s_{n-1},\ldots,s_1)\in
S^*$ (including the empty string) such that $s_{i+1}\leq_{\L}s_i$
for all $i$.  Of course $\flag\subseteq \flagc\subseteq S^*$. A
string $(s_n,\ldots,s_1)$ is termed \emph{$\pi'$-free} if each $s_i$
is $\pi'$-free (see Definition~\ref{definepiprimefree}).

We use $\flaga$ and $\flagca$ to denote the respective subsets of
$\flag$ and $\flagc$ consisting of $\pi'$-free strings.  There is a
natural retraction from $\flagc$ to $\flag$ (mapping $\flagca$ onto
$\flaga$), which we proceed to define. Define an elementary
reduction to be a rule of the from $(s',s)\to s'$ where $s'\L s$.
Elementary reductions are length-decreasing.  It is well known and
easy to prove that the elementary reductions form a confluent
rewriting system and so each element $\vec{x}\in \flagc$ can be
reduced to a unique flag $\vec{x}\mathcal \red\in \flag$, called its
\emph{reduction}~\cite{TilsonXII}. Clearly the reduction map is a
retract and takes $\flagca$ to $\flaga$. The Rhodes
expansion~\cite{TilsonXII} defines a multiplication on $\flag$ using
the reduction map. Our constructions are motivated by properties of
the Rhodes expansion, but we shall not need this expansion per se.
Some key properties of the reduction map, which are immediate from
the definition, are recorded in the following lemma.

\begin{Lemma}\label{reductionmap}
The reduction map $\red$ enjoys the following properties:
\begin{enumerate}
\item For $\vec x\in \flagc$, $\vec x\red\omega = \vec
x\omega$;
\item Let $\vec b,\vec a,\vec b\cdot\vec a\in \flagc$ and suppose $\vec a\red =
(a_{\ell},a_{\ell-1},\ldots,a_1)$.  Then one has $(\vec b\cdot\vec
a)\red\alpha_{\ell} = (x_{\ell},a_{\ell-1},\ldots,a_1)$ where
$x_{\ell}\L a_{\ell}$.
\end{enumerate}
\end{Lemma}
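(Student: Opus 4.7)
The plan is to prove both parts by a direct analysis of what elementary reductions do to positions in a chain, using confluence to choose a convenient reduction order.

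For part (1), I would observe that an elementary reduction $(s',s)\to s'$ deletes the element on the right and keeps the element on the left. Since $\vec x\omega$ denotes the leftmost (highest-indexed) letter, any elementary reduction involving this letter must have it as the $s'$, so it is retained; and any reduction not involving it leaves it untouched. By induction on the length of a reduction sequence to the normal form $\vec x\red$, the leftmost letter never changes, so $\vec x\red\omega=\vec x\omega$.

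For part (2), I would exploit confluence to perform reductions in two stages. First, perform all reductions that lie entirely within the $\vec a$-part of $\vec b\cdot\vec a$; by confluence these terminate at $\vec a\red=(a_\ell,\ldots,a_1)$, leaving the intermediate string $\vec b\cdot\vec a\red$. In this intermediate string, positions $1,\ldots,\ell-1$ hold $a_1,\ldots,a_{\ell-1}$ and position $\ell$ holds $a_\ell$, and these satisfy strict $\L$-inequalities $a_{j+1}<_\L a_j$ because $\vec a\red\in\flag$. Consequently no further reduction can occur at any position strictly below $\ell$; every remaining reducible pair sits at a position $j\geq\ell$.

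Now I would track what such remaining reductions can do to positions $\leq\ell$. A reduction that deletes the letter at position $j$ leaves all positions $<j$ unchanged and shifts the letters above $j$ down by one. Hence a reduction at $j>\ell$ affects none of positions $1,\ldots,\ell$, while a reduction at $j=\ell$ requires the current letter at position $\ell$ to be $\L$-equivalent to the letter at position $\ell+1$, and replaces the position-$\ell$ letter by that latter one — which, by the reduction condition, is $\L$-equivalent to the letter it just replaced. An induction on the length of the remaining reduction sequence then shows that positions $1,\ldots,\ell-1$ always hold $a_1,\ldots,a_{\ell-1}$ and position $\ell$ always holds some letter $\L$-equivalent to $a_\ell$. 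Applying this to the normal form $(\vec b\cdot\vec a)\red$ gives $(\vec b\cdot\vec a)\red\alpha_\ell=(x_\ell,a_{\ell-1},\ldots,a_1)$ with $x_\ell\L a_\ell$.

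The main obstacle is purely bookkeeping: keeping the right-to-left indexing straight while explaining that a reduction at position $j$ leaves lower positions alone and that the $\L$-class at position $\ell$ is invariant under the only reductions that can touch it. Once the two-stage reduction via confluence is set up and the strict $\L$-chain in $\vec a\red$ is used to block reductions below position $\ell$, both conclusions follow without further ingredients.
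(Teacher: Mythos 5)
Your proof is correct. The paper supplies no argument here — it states that both properties are ``immediate from the definition'' of the reduction map — and your two-stage argument (first reduce within $\vec a$ using confluence, then observe that the strict $\L$-chain in $\vec a\red$ blocks any further reduction at positions below $\ell$, so only the letter at position $\ell$ can change and only within its $\L$-class) is exactly the careful spelling-out of that assertion.
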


Let us now turn to defining an auxiliary semigroup that will play a
role in the proof.
\begin{Def}[$\check S$]\label{definechecks}
Denote by $\check S$ the monoid of all functions $f:S\to S$ such
that
\begin{enumerate}
\item  $sf\leq_{\R} s$ for all $s\in S$;
\item  $f$ preserves $\L$, i.e.\ $s\L s'$ implies $sf\L s'f$;
\item  There exists $s_f\in S^I$ such that $sf\R s$ implies $sf
=ss_f$.
\end{enumerate}
\end{Def}
 Notice that the natural
action of $S^I$ on the right of $S$ belongs to $\check S$.

\begin{Prop} The set $\check S$ is a monoid.
\end{Prop}
\begin{proof}
Clearly $\check S$ contains the identity. The set of functions
satisfying the second item is obviously closed under composition.
Suppose $f,g\in S$. Then  $sfg\leq_{\R} sf\leq_{\R} s$. Moreover, if
$s\R sfg$, all the inequalities are equalities and so $sfg = sfs_g
=ss_fs_g$. In particular, we can take $s_{fg}=s_fs_g$.
\end{proof}

Let us write $\check S^{\infty}$ for the action monoid of the
infinite wreath product $\wr^{\infty} (S,\check S)$ of right
transformation monoids $(S,\check S)$. There is a natural action of
$\bigwr$ on $S^*$ by length-preserving, sequential functions via the
projections $\wr^{\infty}(S,\check S)\to \wr^n(S,\check S)$;  to
obtain the action on a word of length $n$, project first to
$\wr^n(S,\check S)$ and then act.  If $F\in \bigwr$ and $\vec a\in
S^*$, then there is a unique element ${}_{\vec a}F\in \check
S^{\infty}$ such that $(\vec b\cdot \vec a)F = \vec b{}_{\vec
a}F\cdot \vec aF$ for all $\vec b\in S^*$; for example,
${}_{\varepsilon}F = F$.  Also if $F\in \bigwr$, then there is also
a unique element $\sigma_F\in \check S$ such that, for $s\in S$, the
equality $sF = s\sigma_F$ holds.  In particular,
\begin{equation}\label{wreathrecursions}
(s_n,\ldots,s_1)F = (s_n,\ldots,s_2){}_{s_1}F\cdot s_1\sigma_F.
\end{equation}
So $\sigma_F$ describes the action on the first letter, and must
belong to $\check S$, while ${}_{s_1}F\in \check S^{\infty}$ is how
$F$ acts on the rest of a string starting with $s_1$.   In fact,
\eqref{wreathrecursions} can serve as a recursive definition of what
it means to belong to $\bigwr$ (c.f.~\cite{GNS,selfsimilar}).  Now
in our situation, by definition of $\check S$, there is an element
$s_{\sigma_F}\in S^I$ so that $s_1F =s_1\sigma_F\R s_1$ implies
$s_1F = s_1s_{\sigma_F}$.

Let us consider some examples to illustrate this formalism for
infinite iterated wreath products, in particular the wreath
recursion \eqref{wreathrecursions}.  For simplicity, we work with
$\wr^{\infty} (\{0,1\},T_2)$ where $T_2$ is the full transformation
semigroup on two letters. Notice that the iterated wreath product
$\wr^{\infty} (\{0,1\},T_2)$ is isomorphic to the semidirect product
$(\wr^{\infty} (\{0,1\},T_2))^2\rtimes T_2$.  As a first example
example, consider the $2$-adic odometer, which adds one to the
$2$-adic expansion of an integer (where the least significant bit is
the first one read from \emph{right to
left})~\cite{GNS,selfsimilar}.

\begin{Example}[Odometer]
Let $A$ be the $2$-adic odometer considered above, acting on
$\{0,1\}^*$, and let $I$ be the identity function on $\{0,1\}^*$. If
a $2$-adic integer has $0$ as its least significant bit, we change
the $0$ to a $1$ and then continue with the identity map the rest of
the way; if the least significant bit is $1$, we change it to $0$
and we add $1$ to what remains (i.e.\ perform a carry).  So in terms
of the wreath recursion \eqref{wreathrecursions}, $\sigma_A = (01)$
and ${}_0A = I$, ${}_1A=A$. If we identify $\wr^{\infty}
(\{0,1\},T_2)$ with $\wr^{\infty} (\{0,1\},T_2)^2\rtimes T_2$, then
$A=((I,A),(01))$.
\end{Example}

Next we consider the two sections to the unilateral shift.

\begin{Example}[Shift]
Consider the functions $F$, $G$ on $\{0,1\}^*$ that send
$x_nx_{n-1}\cdots x_1$ to, respectively, $x_{n-1}x_{n-2}\cdots x_10$
and $x_{n-1}x_{n-2}\cdots x_11$.  Both of these functions act by
remembering the first letter, then resetting it to a predetermined
symbol, and then resetting the second letter to the first and so on
and so forth. Formally, the wreath recursion
\eqref{wreathrecursions} is given by $\sigma _{F} = \ov 0$,
$\sigma_G=\ov 1$ (where $\ov x$ is the constant map to $x$) and
${}_0F=F={}_0G$, ${}_1F=G={}_1G$. Identifying $\wr^{\infty}
(\{0,1\},T_2)$ with $\wr^{\infty} (\{0,1\},T_2)^2\rtimes T_2$, we
have $F=((F,G),\ov 0)$ and $G= ((F,G),\ov 1)$.  So, for example, the
wreath recursion
\[(x_nx_{n-1}\cdots x_21)F = (x_nx_{n-1}\cdots x_2)G\cdot 0=x_{n-1}x_{n-2}\cdots
x_210\] holds. Notice that on infinite bit strings, $F$ and $G$ are
the two sections to the unilateral shift that erases the first
letter.
\end{Example}

From these examples, the reader should instantly see the connection
between iterated wreath products and sequential
functions~\cite{Eilenberg,EilenbergA,GNS,selfsimilar}.

A subsemigroup $T$ of $\bigwr$ is called \emph{self-similar} if, for
all $F\in T$ and $\vec a\in S^*$, one has ${}_{\vec a}F\in T$; so
$\bigwr$ itself is self-similar.  It is actually enough that, for
each letter $s\in S$, one has ${}_sF\in T$.   For instance, the
group generated by the $2$-adic odometer $A$ is self-similar in
$\wr^{\infty} (\{0,1\},T_2)$ since ${}_0A=I$, ${}_1A=A$.  Similarly
the semigroup generated by the two sections $F$, $G$ to the shift is
self-similar since ${}_0F=F={}_0G$, ${}_1F=G={}_1G$.  This viewpoint
on infinite wreath products is due to Grigorchuk and
Nekrashevych~\cite{GNS,selfsimilar}.

\begin{Def}[$\bigwrf$]\label{defbigwrf}
Denote by $\bigwrf$ the collection of all transformations $F\in
\bigwr$ such that whenever $(x_n,x_{n-1},\ldots,x_1)F =
(y_n,y_{n-1},\ldots, y_1)$ with \mbox{$x_{n-1}\R y_{n-1}$} and
$x_n\R y_n$,  there exists $s\in S^I$ with $x_{n-1}s = y_{n-1}$ and
$x_ns=y_n$.
\end{Def}

The element $s$ can depend on the string $(x_n,\ldots,x_1)$.

\begin{Prop}
$\bigwrf$ is a self-similar submonoid of $\bigwr$.
\end{Prop}
\begin{proof}
Clearly it contains the identity.    Suppose $F,G\in \bigwrf$ and
\[(x_n,x_{n-1},\ldots,x_1)FG = (z_n,z_{n-1},\ldots,z_1)\] with
$x_{n-1}\R z_{n-1}$ and $x_n\R z_n$. Suppose that
$(x_n,x_{n-1},\ldots,x_1)F = (y_n,y_{n-1},\ldots,y_1)$. Then, for
$i=n-1,n$, we have $x_i\geq_{\R} y_i\geq_{\R} z_i\R x_i$. Thus
$x_i\R y_i$ and $y_i\R z_i$, $i=n-1,n$.  By assumption, there exists
$s,t\in S$ with $x_is=y_i$ and $y_it=z_i$, $i=n-1,n$.  Then
$x_ist=z_i$, for $i=n-1,n$. Hence $\bigwrf$ is submonoid of
$\bigwr$. Self-similarity is immediate from the equation
$(x_n,\ldots,x_1){}_{\vec a}F\cdot \vec aF = ((x_n,\ldots,x_1)\cdot
\vec a)F$ and the definition of $\bigwrf$.
\end{proof}

If $s\in S$, define the diagonal operator $\Delta_s:S^*\to S^*$ by
\begin{equation}\label{diagonal}
(x_n,x_{n-1},\ldots,x_1)\Delta_s = (x_ns,x_{n-1}s,\ldots,x_1s).
\end{equation}
 It
is immediate  $\Delta_s\in \bigwrf$.  The next lemma expresses the
so-called Zeiger property of $\bigwrf$.

\begin{Lemma}[Zeiger Property]\label{zeigerprop}
Suppose $\vec x = (x_n,\ldots,x_1)\in \flagc$ and $F\in \bigwrf$ is
such that $\vec xF = (y_n,y_{n-1},\ldots,y_1)$ with
$x_{n-1}=y_{n-1}$ and $x_n\R y_n$.  Then $x_n=y_n$.
\end{Lemma}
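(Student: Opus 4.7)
The plan is to extract from the hypothesis $F \in \bigwrf$ a single element $s \in S^I$ that simultaneously witnesses the right multiplications $x_{n-1} \mapsto y_{n-1}$ and $x_n \mapsto y_n$, and then exploit the $\L$-order relation $x_n \leq_{\L} x_{n-1}$ coming from the $\L$-chain assumption to conclude that this $s$ must fix $x_n$.

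First I would note that the hypotheses $x_{n-1} = y_{n-1}$ (hence $x_{n-1} \R y_{n-1}$ trivially) and $x_n \R y_n$ are exactly what Definition~\ref{defbigwrf} requires. Applying the definition to the word $(x_n, x_{n-1}, \ldots, x_1)$ produces some $s \in S^I$ with $x_{n-1} s = y_{n-1}$ and $x_n s = y_n$. Since $y_{n-1} = x_{n-1}$ by assumption, the first equation becomes $x_{n-1} s = x_{n-1}$, i.e.\ $s$ is a right identity for $x_{n-1}$.

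Next I would invoke that $\vec x \in \flagc$ is an $\L$-chain, so in particular $x_n \leq_{\L} x_{n-1}$, which gives a factorization $x_n = u x_{n-1}$ for some $u \in S^I$. Then
\[
y_n = x_n s = u x_{n-1} s = u x_{n-1} = x_n,
\]
which is the desired conclusion.

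There is essentially no obstacle here: the lemma is a direct unpacking of the $\bigwrf$ condition combined with the $\L$-chain property. The only point requiring minor care is that Definition~\ref{defbigwrf} produces a \emph{single} element $s$ acting on both the last and next-to-last letters simultaneously; it is precisely this coupling that lets the equation $x_{n-1} s = x_{n-1}$ propagate, via the $\L$-factorization, to $x_n s = x_n$.
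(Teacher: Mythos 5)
Your proof is correct and follows the paper's argument essentially verbatim: both invoke Definition~\ref{defbigwrf} to obtain a single $s\in S^I$ with $x_{n-1}s=y_{n-1}=x_{n-1}$ and $x_ns=y_n$, then write $x_n=ux_{n-1}$ from the $\L$-chain condition and compute $y_n=ux_{n-1}s=ux_{n-1}=x_n$.
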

\begin{proof}
By definition of $\bigwrf$, there exists $s\in S^I$ such that
$x_{n-1}s = y_{n-1}=x_{n-1}$ and $x_ns=y_n$.  Since
$x_n\leq_{\L}x_{n-1}$, we can write $x_n= ux_{n-1}$ with $u\in S^I$.
Then $y_n = x_ns=ux_{n-1}s = ux_{n-1}=x_n$, as required.
\end{proof}

We  now define an important transformation semigroup on $\flagc$.

\begin{Def}[$\mathscr C$]
Let $\mathscr C$ consist of all transformations \mbox{$f:\flagc\to
\flagc$} such that there exists $(\wh f,\ov f)\in \bigwrf\times
(\flagc\setminus \{\varepsilon\})$ with $\vec xf = \vec x\wh f\cdot
\ov f$.
\end{Def}

  For instance, if $s\in
S$, then one readily checks that $(\Delta_s,s)$ defines an element
of $\mathscr C$ via the formula:
\[(x_n,x_{n-1},\ldots,x_1)(\Delta_s,s) = (x_ns,x_{n-1}s,\ldots, x_1s,s).\]
Such elements correspond to generators of the Rhodes
expansion~\cite{TilsonXII}.  Notice that in order for $(\wh f,\ov
f)$ to define an element of $\mathscr C$ one must have $x\wh
f\leq_{\L} \ov f\omega$ for every $x\in S$. It is essential that
$\ov f$ is not empty in the definition of $\mathscr C$.

\begin{Prop}\label{Cissemigroupandformula}
The set $\mathscr C$ is a semigroup.
\end{Prop}
\begin{proof}
Let $f,g\in \mathscr C$.  We claim $\wh{fg} = \wh{f}{}_{\ov
f}\wh{g}$ and $\ov {fg} = (\ov f)\wh g\cdot \ov g$. Indeed,
\begin{equation*}
\vec xfg = (\vec x\wh f\cdot \ov f)g  = (\vec x\wh f\cdot \ov f)\wh
g\cdot \ov g  = \vec x\wh f{}_{\ov f}\wh g\cdot (\ov f)\wh g\cdot
\ov g.
\end{equation*}
As $\bigwrf$ is self-similar, ${}_{\ov f}\wh{g}\in \bigwrf$ and so
$fg\in\mathscr C$.
\end{proof}

An immediate consequence of the definition is:
\begin{Lemma}\label{wreathlike}
If $f\in \mathscr C$ and $\vec a,\vec b,\vec b\cdot\vec a\in
\flagc$, then $(\vec b\cdot \vec a)f = \vec b{}_{\vec a}\wh f\cdot
\vec af$.
\end{Lemma}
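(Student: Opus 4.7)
The plan is to unpack the defining equation for membership in $\mathscr C$ and then apply the wreath recursion to the $\bigwrf$-component. By definition of $\mathscr C$, the element $f$ comes with a pair $(\wh f,\ov f)\in \bigwrf\times (\flagc\setminus\{\varepsilon\})$ such that $\vec x f = \vec x\wh f\cdot \ov f$ for every $\vec x\in \flagc$. In particular, evaluating at $\vec x=\vec b\cdot \vec a$ gives
\[
(\vec b\cdot \vec a)f \;=\; (\vec b\cdot \vec a)\wh f\cdot \ov f.
\]

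Next I would invoke the wreath recursion for $\wh f\in \bigwr$. Recall from the paragraph introducing the elements ${}_{\vec a}F$ that, by construction, $(\vec b\cdot \vec a)\wh f = \vec b\,{}_{\vec a}\wh f\cdot \vec a\wh f$ for every $\vec b\in S^*$. Substituting this into the previous line yields
\[
(\vec b\cdot \vec a)f \;=\; \vec b\,{}_{\vec a}\wh f\cdot \vec a\wh f\cdot \ov f.
\]
Finally, re-collecting the last two factors and applying the defining equation for $f$ once more gives $\vec a\wh f\cdot \ov f=\vec af$, producing the desired identity
\[
(\vec b\cdot \vec a)f \;=\; \vec b\,{}_{\vec a}\wh f\cdot \vec af.
\]

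There is essentially no obstacle: the content of the lemma is that transformations in $\mathscr C$ satisfy a wreath-recursion identity because their suffix $\ov f$ is the same on every input, so the wreath recursion for the prefix factor $\wh f\in \bigwrf$ transports directly to $f$. The only side condition worth checking is that the intermediate expressions are well-defined, i.e.\ that $\vec a f\in \flagc$ so that the concatenation $\vec b\,{}_{\vec a}\wh f\cdot \vec af$ makes sense; this is immediate from the fact that $f$ is, by definition, a transformation of $\flagc$.
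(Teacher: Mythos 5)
Your proof is correct and is essentially identical to the paper's: both unfold $(\vec b\cdot\vec a)f=(\vec b\cdot\vec a)\wh f\cdot\ov f$, apply the wreath recursion $(\vec b\cdot\vec a)\wh f=\vec b\,{}_{\vec a}\wh f\cdot\vec a\wh f$, and reassemble $\vec a\wh f\cdot\ov f=\vec af$. The side remark about well-definedness is harmless but not needed, since the paper has already established that $\mathscr C$ acts on $\flagc$.
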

\begin{proof}
Indeed, a straightforward computation yields \[(\vec b\cdot \vec a)f
= (\vec b\cdot \vec a)\wh f\cdot \ov f = \vec b{}_{\vec a}\wh f\cdot
(\vec a\wh f\cdot \ov f) = \vec b{}_{\vec a}\wh f\cdot \vec af,\]
proving the lemma.
\end{proof}

Lemma~\ref{wreathlike} shows that $\mathscr C$ behaves very much
like an iterated wreath product, a property that we shall exploit
repeatedly. In fact, an element of $\mathscr C$ is like an
asynchronous transducer that outputs $\ov f$ with empty input and
then computes synchronously.  We are almost prepared to define our
semigroup in $\barGpi$. Recall that $\rho$ denotes the reduction
map.

\begin{Def}[$\mathscr C^{\pi}$]\label{CA}
Let $\mathscr C^{\pi}$ be the subset of $\mathscr C$ consisting of
all transformations $f\in \mathscr C$ such that:
\begin{enumerate}
\item $\flagca f\subseteq \flagca$;
\item $\red f\red =f\red$.
\end{enumerate}
\end{Def}

\begin{Prop}\label{CAissemi}
The set $\mathscr C^{\pi}$ is a semigroup.
\end{Prop}
\begin{proof}
Closure of the first item under composition is clear.  The
computation
\begin{equation}\label{redishom}
\red fg\red = \red f (\red g\red) = (\red f\red)g\red = f\red g\red
= fg\red
\end{equation}
completes the proof that $\mathscr C^{\pi}$ is a semigroup.
\end{proof}

Notice that \eqref{redishom} allows us to define an action of
$\mathscr C^{\pi}$ on $\flaga$ by $\vec x\mapsto \vec xf\rho$ for
$f\in \mathscr C^{\pi}$.  Let us denote the resulting faithful
transformation semigroup by $(\flaga,\suppera)$. Observe that
$\flagca$ appears in Definition~\ref{CA}, while in the definition of
$\suppera$ we use $\flaga$.  Since $\flaga$ is finite, so is
$\suppera$. Our goal is to prove $\suppera\in \barGpi$. We do this
by showing that if $\vec x\in \flaga$, $p\in \pi'$ and $f\in
\suppera$ with $\vec xf^p = \vec x$, then $\vec xf=\vec x$. This
implies that $\suppera$ has no cyclic subgroup of prime order
belonging to $\pi'$ and hence $\suppera\in \barGpi$. First we make a
simple observation.

\begin{Lemma}\label{omegaRdrops}
Let $f\in \mathscr C$ and $\varepsilon\neq\vec x\in \flagc$.  Then
\begin{equation*}
\vec xf\red\omega = \vec xf\omega = \vec x\wh f\omega\leq_{\R} \vec
x\omega.
\end{equation*}
\end{Lemma}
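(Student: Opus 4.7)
The plan is to prove the three assertions in the displayed equation from right to left, exploiting, respectively, (a) Lemma~\ref{reductionmap}(1), (b) the defining concatenation formula for elements of $\mathscr C$, and (c) the $\leq_{\R}$-drop property built into Definition~\ref{definechecks} propagated through the wreath recursion.

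First I would dispatch $\vec xf\red\omega = \vec xf\omega$. Since $f\in \mathscr C$ acts on $\flagc$, we have $\vec xf\in \flagc$, and Lemma~\ref{reductionmap}(1) tells us that $\red$ preserves $\omega$ on $\flagc$, giving the equality immediately.

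Next I would handle $\vec xf\omega = \vec x\wh f\omega$. By definition of $\mathscr C$, $\vec xf = \vec x\wh f\cdot \ov f$, and since $\wh f\in \bigwrf$ is length-preserving, the leftmost letter of the concatenation is the leftmost letter of $\vec x\wh f$. As $\vec x\neq\varepsilon$, this letter exists and is precisely $\vec x\wh f\omega$.

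Finally, the inequality $\vec x\wh f\omega\leq_{\R}\vec x\omega$. Writing $\vec x = (x_n,\ldots,x_1)$ and iterating the wreath recursion \eqref{wreathrecursions} $n-1$ times, one sees that the leftmost letter of $\vec x\wh f$ is $x_n\sigma_{G}$, where $G = {}_{(x_{n-1},\ldots,x_1)}\wh f\in\bigwr$ and hence $\sigma_G\in \check S$. Property~(1) of Definition~\ref{definechecks} then yields $x_n\sigma_G\leq_{\R} x_n$, completing the proof.

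I do not anticipate any real obstacle: the lemma is essentially a bookkeeping statement about how $\mathscr C$ inherits from $\check S$ and the wreath product formalism. The only mildly subtle point is tracking the index through the wreath recursion to see that the $\leq_{\R}$-drop property at a single coordinate (which is what $\check S$ supplies) indeed governs the leftmost coordinate as well; this is immediate once the recursion is written out.
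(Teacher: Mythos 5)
Your proposal is correct and follows essentially the same route as the paper: the equality $\vec xf\red\omega = \vec xf\omega$ via Lemma~\ref{reductionmap}(1), the equality $\vec xf\omega = \vec x\wh f\omega$ from the concatenation formula for $\mathscr C$ (using that $\wh f$ is length-preserving and $\vec x\neq\varepsilon$), and the inequality $\vec x\wh f\omega\leq_{\R}\vec x\omega$ from $\wh f\in\bigwrf\subseteq\bigwr$. The only difference is that you unwind the wreath recursion explicitly to see that the leftmost coordinate is $x_n\sigma_G$ for some $G\in\bigwr$ with $\sigma_G\in\check S$, whereas the paper treats this coordinatewise $\leq_{\R}$-drop as an immediate consequence of membership in $\bigwr$; this is a welcome clarification rather than a different argument.
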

\begin{proof}
By definition of $\mathscr C$, we have $\vec xf\omega = (\vec x\wh
f\cdot \ov f)\omega = \vec x\wh f\omega$.  Since $\wh f\in
\bigwrf\subseteq \bigwr$, we conclude $\vec x\wh f\omega\leq_{\R}
\vec x\omega$. The lemma then follows from Lemma~\ref{reductionmap}.
\end{proof}

Notice if $f\in \mathscr C^{\pi}$, then $\vec xf\red\neq
\varepsilon$, for all $\vec x\in \flagc$.  Indeed, $\ov f\neq
\varepsilon$ implies $\vec xf\red = (\vec x\wh f\cdot \ov f)\red\neq
\varepsilon$.  Finally, we turn to the main result of this section.

\begin{Thm}\label{isaperiodic}
Let $S$ be a finite semigroup.  Then $\suppera\in \barGpi$.
\end{Thm}
\begin{proof}
As $\red:\mathscr C^{\pi}\to \suppera$ is a homomorphism by
\eqref{redishom}, it suffices to prove that if $\vec x\in \flaga$,
$p\in \pi'$ and $f\in \mathscr C^{\pi}$ are such that $\vec xf^p\red
=\vec x$, then $\vec xf\red = \vec x$.   Set $\vec x_i = \vec
xf^i\red$; so $\vec x_0=\vec x_p = \vec x$.  As $f\in \mathscr
C^{\pi}$, the above discussion shows $\vec x_i\neq \varepsilon$ for
all $i$. Suppose that $\vec x=(x_n,x_{n-1},\ldots,x_1)$. We prove
the following critical claim by induction on $\ell$.

\begin{Claim}\label{keyclaim}
For each $1\leq i\leq p$ and each $0\leq \ell\leq n$, we have $|\vec
x_i|\geq \ell$ and $\vec x_i\alpha_{\ell}=\vec x\alpha_{\ell} =
(x_{\ell},x_{\ell-1},\ldots,x_1)$.
\end{Claim}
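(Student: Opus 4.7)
The plan is to prove Claim~\ref{keyclaim} by induction on $\ell$. The base case $\ell = 0$ is immediate since $\alpha_0$ yields the empty string and $|\vec x_i|\geq 0$ always holds. For the inductive step, assume the claim for $\ell$ with $\ell+1 \leq n$, and set $\vec a_\ell := (x_\ell,\ldots,x_1)$. By the IH, we may write $\vec x_i = \vec c_i \cdot \vec a_\ell$ for some (possibly empty) string $\vec c_i$. Applying Lemma~\ref{wreathlike},
\[
\vec x_{i-1} f \;=\; \vec c_{i-1}\,{}_{\vec a_\ell}\wh f \;\cdot\; \vec a_\ell f,
\]
so $F := {}_{\vec a_\ell}\wh f \in \bigwrf$ is fixed (independent of $i$), and Lemma~\ref{reductionmap}(2) gives control of the rightmost $k := |\vec a_\ell f\red|$ entries of $\vec x_i$.

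The first task is to establish $|\vec x_i|\geq \ell+1$. If $k \geq \ell+1$, this follows from Lemma~\ref{reductionmap}(2) since $|\vec x_i|\geq k$. If $k \leq \ell$, suppose for contradiction that $\vec x_i = \vec a_\ell$ for some $i$. Then $\vec x_{i+1} = \vec a_\ell f \red$ has length $k \leq \ell$, and the IH applied to $\vec x_{i+1}$ forces $k=\ell$ and $\vec a_\ell f\red = \vec a_\ell$; iterating yields $\vec x_p = \vec a_\ell$, contradicting $|\vec x_p| = n > \ell$. Hence $|\vec x_i| \geq \ell+1$ in all cases and $y_i := \vec x_i\tau_{\ell+1}$ is well-defined.

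To show $y_i = x_{\ell+1}$ for every $i$: when $k \geq \ell+2$, Lemma~\ref{reductionmap}(2) yields $y_i = (\vec a_\ell f\red)\tau_{\ell+1}$, constant in $i$ and thus equal to $y_p = x_{\ell+1}$. When $k \leq \ell+1$, my plan is to leverage the $\pi'$-freeness hypothesis: since $x_{\ell+1}$ is $\pi'$-free, the Sch\"utzenberger group $\Gamma_R(H)$ of the $\H$-class $H$ of $x_{\ell+1}$ is a $\pi$-group. Using the wreath recursion to read off how $\sigma_F \in \check S$ acts on the rightmost letter of $\vec c_{i-1}$, combined with the Zeiger property (Lemma~\ref{zeigerprop}), Lemma~\ref{omegaRdrops}, and Definition~\ref{definechecks}(3), one shows that each $y_i$ lies in $H$ and that the transitions $y_{i-1}\mapsto y_i$ are induced by an element of $\stab H$, producing a cyclic orbit in $\Gamma_R(H)$ via Proposition~\ref{stayinschutz}. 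Since the orbit length divides both $p\in\pi'$ and $|\Gamma_R(H)|$ (a $\pi$-number), it must equal $1$, giving $y_i = x_{\ell+1}$. I expect the main technical obstacle to be the case $k \leq \ell$, where Lemma~\ref{reductionmap}(2) gives no information about position $\ell+1$ directly; resolving it requires carefully tracing the reduction of $\vec c_{i-1}F \cdot \vec a_\ell f$ to locate the surviving letter at position $\ell+1$ and verify it lies in $H$.
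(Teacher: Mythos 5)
Your base case and the argument that $|\vec x_i|\geq\ell+1$ are both fine; the latter is actually a nice alternative to the paper's, which instead reads off from $\vec x\omega\geq_{\R}\vec x_1\omega\geq_{\R}\cdots\geq_{\R}\vec x_p\omega=\vec x\omega$ that $\vec x_i\omega\R x_n<_{\L}x_{\ell}$ and so $\vec b_i\neq\varepsilon$. Your Case $k\geq\ell+2$ coincides with the paper's Case 1. The genuine gap is in the range $k\leq\ell+1$, where you give only a plan and, more importantly, the plan as stated conflates several distinct phenomena that the paper has to treat separately.

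Concretely, with $\vec z=\vec a_\ell f\red$: when $|\vec z|=\ell+1$ (paper's Case 2), the transition on position $\ell+1$ need \emph{not} come from an element of $\stab{H}$. The paper splits on whether $(\vec b_i\alpha_1){}_{\vec a}\wh f\R\vec b_i\alpha_1$ ever holds. In the subcase where the $\R$-class strictly drops for \emph{all} $i$, there is no Sch\"utzenberger orbit to speak of; instead confluence of $\red$ and the $<_{\J}$-bound on the entries of $(\vec b_i){}_{\vec a}\wh f$ force $\vec b_{i+1}\alpha_1=\vec z\omega$ outright, constant in $i$. Only in the complementary subcase (some $j$ with $\R$ preserved) does the orbit argument apply, and even there it requires showing inductively (Subclaim~\ref{newclaim}) that once the $\R$-class is preserved at step $j$, it keeps being preserved for all $i\geq j$ with a \emph{fixed} multiplier $u\in S^I$ coming from Definition~\ref{definechecks}(3); only then does periodicity mod $p$ combined with $\pi'$-freeness kill $u$. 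Your sketch jumps directly to ``the $y_i$ lie in $H$ and the transitions lie in $\stab H$'', which is false in the first subcase and unproved in the second.

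The case $|\vec z|=\ell$ (paper's Case 3) is a second independent difficulty and is \emph{not} a Sch\"utzenberger argument at all: $\pi'$-freeness plays no role there. Since $\vec z=\vec a_\ell$, position $\ell+1$ receives no information from $\vec a_\ell f$, so one has only $\vec b_{i+1}\alpha_1\L(\vec b_i\alpha_1){}_{\vec a}\wh f$ (equation~\eqref{anotherusefuleq}). To promote this to equality one first needs Subclaim~\ref{secondclaim} (no strict $<_{\J}$ drop along the $\vec b_i\alpha_1$, proved by periodicity), which gives $(\vec b_i\alpha_1){}_{\vec a}\wh f\R\vec b_i\alpha_1$, and then one observes that $\vec a_\ell\wh f\omega=x_\ell$ so that $(x_{\ell+1},x_\ell,\ldots,x_1)\wh f$ has its $\ell$-th coordinate \emph{fixed}; at that point the Zeiger property of $\bigwrf$ (Lemma~\ref{zeigerprop}) forces $\vec b_{i+1}\alpha_1=x_{\ell+1}$. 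So the conclusion is reached by a fixed-subchain rigidity argument, not by bounding an orbit length. Your plan does cite the Zeiger property but embeds it inside an orbit picture that does not fit this case. Without separating out these three subarguments, the proof cannot be completed as proposed.
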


Let us show how the claim implies the theorem.  As $\vec x_1=\vec
xf\red$, it suffices from the claim to show that $|\vec x_1|=|\vec
x|$.  But Lemma~\ref{omegaRdrops} implies that
\begin{equation}\label{rgoesdown}
\vec x\omega\geq_{\R}\vec x_1\omega\geq_{\R}\cdots\geq_{\R}\vec
x_{p-1}\omega\geq_{\R}\vec x_p\omega = \vec x\omega
\end{equation}
 and so $x_n =
\vec x_1\tau_n\geq_{\L}\vec x_1\omega\R \vec x\omega =x_n$.
Therefore, $\vec x_1\omega\L x_n$ and hence, since $\vec x_1$ is a
flag, we conclude $|\vec x_1|=n$.  This shows that $\vec x=\vec
xf\red$ and completes the proof of Theorem~\ref{isaperiodic} once we
establish the claim.

The claim is trivial for $\ell=0$.  Assume the claim is true for
 $0\leq \ell\leq n-1$.  We prove it for $\ell+1$. Let
$\vec a = \vec x\alpha_{\ell} = (x_{\ell},x_{\ell-1},\ldots, x_1)$
and set $\vec x = \vec b\cdot \vec a$; if $\ell=0$, then $\vec a =
\varepsilon$ and $\vec b = \vec x$.  Then $\vec x_i = \vec b_i\cdot
\vec a$ for some $\vec b_i\in \flaga$, each $1\leq i\leq p$, by the
inductive hypothesis. First we show  $|\vec x_i|\geq \ell+1$.
Indeed, \eqref{rgoesdown} implies $\vec x_i\omega\R x_n<_{\L}
x_{\ell}$ since $\vec x$ is a flag. We conclude $\vec
b_i\neq\varepsilon$, that is $|\vec x_i|\geq \ell+1$, for all $i$.

Let us set $\vec z= \vec af\red$.  First observe that $|\vec z|\geq
\ell$ and $\vec z\alpha_{\ell}=\vec a$.  Indeed, \[\vec b_1\cdot
\vec a=\vec x_1 = \vec xf\red = (\vec b\cdot \vec a)f\red = (\vec
b{}_{\vec a}\wh f\cdot \vec af)\red = (\vec b{}_{\vec a}\wh f\cdot
\vec z)\red\] by Lemma~\ref{wreathlike} and the confluence of
reduction. As each entry of $\vec b{}_{\vec a}\wh f$ is
$<_{\J}$-below $x_{\ell}$ (since $\vec x$ is a flag and ${}_{\vec
a}\wh f\in\bigwrf$), we must have $\vec z\alpha_{\ell} = \vec a$.
There are three cases.

\subsection*{Case 1} Suppose that $|\vec z|\geq\ell+2$.  Then an
application of Lemma~\ref{wreathlike} and the second item of
Lemma~\ref{reductionmap} shows that, for any $\vec y\in \flaga$,
\[(\vec y\cdot \vec a)f\red\alpha_{\ell+1} = (\vec y{}_{\vec a}\wh
f\cdot \vec af)\red\alpha_{\ell+1} = (\vec y{}_{\vec a}\wh f\cdot
\vec z)\rho\alpha_{\ell+1} = \vec z\alpha_{\ell+1}.
\]  In particular, $(\vec x_i)\alpha_{\ell+1}$ is independent of $i$
and so equals $(\vec x)\alpha_{\ell+1}$, as desired.

\subsection*{Case 2} Suppose that $|\vec z|=\ell+1$. Our goal is to show that
$\vec b_i\alpha_1 = x_{\ell+1}$ for all $1\leq i\leq p$.  Set
$s=\vec z\omega$ for convenience. Then we have by
Lemma~\ref{wreathlike} and confluence of the reduction map
\[\vec b_i\cdot \vec a=\vec x_{i} = \vec x_{i-1}f\red =(\vec b_{i-1}\cdot \vec a)f\red=
 ((\vec b_{i-1}){}_{\vec a}\wh f\cdot \vec
af)\red= ((\vec b_{i-1}){}_{\vec a}\wh f\cdot \vec z)\red.\]  So the
second item of Lemma~\ref{reductionmap} allows us to deduce
$\vec b_{i}\alpha_1\L \vec z\omega=s$.
On the other hand, Lemma~\ref{reductionmap} implies $s=\vec
z\omega=\vec af\rho\omega = \vec af\omega$ and so, since $\vec x_if
= (\vec b_i){}_{\vec a}\wh f\cdot \vec af\in\flagc$, we must have
\begin{equation}\label{superequation}
(\vec b_i\alpha_1){}_{\vec a}\wh f\leq_{\L}\vec af\omega = s\L \vec
b_i\alpha_1,\ \text{for all}\ i.
\end{equation}

\subsubsection*{Subcase 1} Suppose $(\vec b_j\alpha_1){}_{\vec a}\wh f\R
\vec b_j\alpha_1$ for some $1\leq j\leq p$.  By definition of
$\check S$ and of the wreath product there is an element $u\in S^I$
so that if $y\in S$ and $y{}_{\vec a}\wh f\R y$, then $y{}_{\vec
a}\wh f=yu$. With this notation, we are assuming $\vec
b_j\alpha_1u\R \vec b_j\alpha_1$. By \eqref{superequation}, $\vec
b_j\alpha_1u=(\vec b_j\alpha_1){}_{\vec a}\wh f\leq_{\L} \vec
b_j\alpha_1$, whence $\vec b_j\alpha_1u\H \vec b_j\alpha_1$. In
particular, $u$ represents an element of $\Gamma_R(H_{\vec
b_j\alpha_1})$ by Proposition~\ref{stayinschutz}.  Notice in the
case of aperiodic pointlikes, this already yields $(\vec
b_j\alpha_1){}_{\vec a}\wh f=\vec b_j\alpha_1u=\vec b_j\alpha_1$ and
so the following subclaim is essentially trivial in the aperiodic
case.

\begin{Subclaim}\label{newclaim}
For $i\geq j$, we have $(\vec b_i\alpha_1){}_{\vec a}\wh f\R \vec
b_i\alpha_1$ and $\vec b_i\alpha_1 = \vec b_j\alpha_1u^{i-j}$.
\end{Subclaim}
We prove the subclaim by induction, the case $i=j$ being by
assumption. Assume it is true for $i$.   Then $(\vec
b_i\alpha_1){}_{\vec a}\wh f\R \vec b_i\alpha_1$ implies $(\vec
b_i\alpha_1){}_{\vec a}\wh f=\vec b_i\alpha_1u$. Since $ (\vec
b_i\alpha_1){}_{\vec a}\wh f\leq_{\L} \vec b_i\alpha_1$ by
\eqref{superequation} and  $(\vec b_i\alpha_1){}_{\vec a}\wh f\R
\vec b_i\alpha_1$ by the induction hypothesis, we conclude $\vec
b_i\alpha_1u = (\vec b_i\alpha_1){}_{\vec a}\wh f\L \vec
b_i\alpha_1\L s$, where the last $\L$-equivalence uses
\eqref{superequation}.
 As $\vec b_i$ is a flag, $\vec b_i\tau_2<_{\L} \vec
b_i\alpha_1$ and so $(\vec b_i){}_{\vec a}\wh f\tau_2<_{\J} \vec
b_i\alpha_1$.  Therefore, $(\vec b_i){}_{\vec a}\wh
f\tau_2<_{\J}(\vec b_i\alpha_1){}_{\vec a}\wh f$. Recalling $\vec z$
is a flag of length $\ell+1$, reduction is confluent and $(\vec
b_i\alpha_1){}_{\vec a}\wh f\L s=\vec z\omega$, we obtain
\begin{equation}\label{longequation}
\vec b_{i+1}\alpha_1 = ((\vec b_i){}_{\vec a}\wh f\cdot \vec
af)\red\tau_{\ell+1} = ((\vec b_i){}_{\vec a}\wh f\cdot \vec
z)\red\tau_{\ell+1}= (\vec b_i\alpha_1){}_{\vec a}\wh f =\vec
b_i\alpha_1u
\end{equation}
Since $\vec b_{i+1}\alpha_1 = \vec b_i\alpha_1u \L \vec
b_i\alpha_1$, the second item of Definition~\ref{definechecks}
implies $(\vec b_{i+1}\alpha_1){}_{\vec a}\wh f\L (\vec
b_i\alpha_1){}_{\vec a}\wh f= \vec b_{i+1}\alpha_1$, where the
equality uses \eqref{longequation}.  Since, $(\vec b_{i+1}\alpha_1)
{}_{\vec a}\wh f\leq_{\R} \vec b_{i+1}\alpha_1$, we conclude $(\vec
b_{i+1}\alpha_1){}_{\vec a}\wh f\R \vec b_{i+1}\alpha_1$, as was
required for the subclaim.  Also, by induction  and
\eqref{longequation}
\[\vec b_{i+1}\alpha_1 = \vec b_i\alpha_1u = \vec
b_j\alpha_1u^{i-j}u = \vec b_j\alpha_1u^{i+1-j},\] where the first
equality uses \eqref{longequation}.  This proves
Subclaim~\ref{newclaim}.

Since $\vec b_j = \vec b_{j+p}$, Subclaim~\ref{newclaim} implies
$\vec b_j\alpha_1 = \vec b_{j+p}\alpha_1 = \vec b_j\alpha_1u^p$.
Since $u$ represents an element of $\Gamma_R(H_{\vec b_j\alpha_1})$
and the Sch\"utzenberger group is a regular permutation group, it
follows $u^p$ represents the identity.  But $\vec b_j\alpha_1$ is
$\pi'$-free and $p\in \pi'$.  We conclude that $u$ represents the
identity of $\Gamma_R(H_{\vec b_j\alpha_1})$ and so $\vec
b_j\alpha_1u=\vec b_j\alpha_1$.  Applying the subclaim, it follows
$\vec b_j\alpha_1=\vec b_i\alpha_1$ for all $i\geq j$. Since the
sequence $\vec b_i$ is periodic with period $p$, we conclude $\vec
b_i\alpha_1$ is independent of $i$ and in particular coincides with
$\vec b_p\alpha_1= x_{\ell+1}$, as required.

\subsubsection*{Subcase 2} Suppose that $(\vec b_i\alpha_1){}_{\vec a}\wh f<_{\R}
\vec b_i\alpha_1$ for all $1\leq i\leq p$.  Then since $\vec x_if =
(\vec b_i){}_{\vec a}\wh f\cdot \vec af\in \flagca$, we deduce
$(\vec b_i)_{\vec a}\wh f\in\flagca$. Therefore, every entry of
$(\vec b_i){}_{\vec a}\wh f$ is $<_{\J}$-below $\vec b_i\alpha_1\L
s=\vec z\omega$ (see \eqref{superequation}). Since $|\vec
z|=\ell+1$,
\[\vec b_{i+1}\alpha_1 = \vec x_if\red\tau_{\ell+1} = ((\vec b_i){}_{\vec
a}\wh f\cdot \vec af)\red\tau_{\ell+1} = ((\vec b_i){}_{\vec a}\wh
f\cdot \vec z)\red\tau_{\ell+1}=\vec z\omega\] In particular, $\vec
b_i\alpha_1$ is independent of $i$, and so taking $i=p$ shows that
$\vec b_i\alpha_1 = \vec b\alpha_1=x_{\ell+1}$, as desired.

\subsection*{Case 3} We now arrive at the final case: when
$|\vec z|=\ell$, i.e.\ $\vec z=\vec a$.  This case does not arise
when $\ell=0$ since $\varepsilon f\rho = \ov f\rho \neq
\varepsilon$.  So assume from now on $\ell\geq 1$. This is the only
case that makes use of the definition of $\bigwrf$. Observe
\begin{equation}\label{bigequationnew}
x_{\ell}=\vec a\omega = \vec z\omega = \vec af\red\omega = \vec
af\omega= (\vec a\wh f\cdot \ov f)\omega = \vec a\wh f\omega.
\end{equation}
Since $\vec x_i = \vec b_i\cdot \vec a$ is a flag, we have the
important formula
\begin{equation}\label{anotherusefuleq}
\vec b_{i+1}\alpha_1 = \vec x_if\red\tau_{\ell+1} = ((\vec
b_i){}_{\vec a}\wh f\cdot \vec af)\red \tau_{\ell+1} = (\vec
b_i){}_{\vec a}\wh f\rho\alpha_1\L (\vec b_i\alpha_1){}_{\vec a}\wh
f
\end{equation}
where the last equality uses $|\vec af\rho| = |\vec z|=\ell$, while
the $\L$-equivalence comes from the second item of
Lemma~\ref{reductionmap}.  The following subclaim will be used to
seal the rest of the proof.

\begin{Subclaim}\label{secondclaim}
There do not exist $m>i\geq 0$ such that $\vec b_{m}\alpha_1<_{\J}
\vec b_i\alpha_1.$
\end{Subclaim}
\noindent Indeed, suppose $\vec b_{m}\alpha_1<_{\J} \vec
b_i\alpha_1$. Then by \eqref{anotherusefuleq}, we have \[\vec
b_{m+1}\alpha_1\L (\vec b_m\alpha_1){}_{\vec a}\wh f\leq_{\R} \vec
b_m\alpha_1<_{\J} \vec b_i\alpha_1.\]  Continuing, we see that $\vec
b_n\alpha_1<_{\J} \vec b_i\alpha_1$ for all $n\geq m$.  But the
sequence $\vec b_n$ is periodic with period $p$, so choosing an
appropriate $n$ yields $\vec b_i<_{\J} \vec b_i$. This contradiction
establishes Subclaim~\ref{secondclaim}.

Now we are in a position to prove that the $\L$ in
\eqref{anotherusefuleq} is really an equality:
\begin{equation}\label{evenmoreusefulequation}
\vec b_{i+1}\alpha_1 =(\vec b_i\alpha_1){}_{\vec a}\wh f.
\end{equation}
Indeed, by \eqref{anotherusefuleq} $\vec b_{i+1}\alpha_1\L (\vec
b_i\alpha_1){}_{\vec a}\wh f\leq_{\R} \vec b_i\alpha_1$.
Subclaim~\ref{secondclaim}  then implies
\begin{equation}\label{whyname}
(\vec b_i\alpha_1){}_{\vec a}\wh f\R \vec b_i\alpha_1>_{\J} \vec
b_i\tau_2\geq _{\R} (\vec b_i){}_{\vec a}\wh f\tau_2
\end{equation}
since $\vec b_i$ is a flag.  From \eqref{anotherusefuleq} and
\eqref{whyname} it follows that indeed \[\vec b_{i+1}\alpha_1 =
(\vec b_i){}_{\vec a}\wh f\rho\alpha_1 = (\vec b_i\alpha_1){}_{\vec
a}\wh f.\]

Let us now prove by induction on $0\leq i\leq p-1$ that $\vec
b_i\alpha_1 = x_{\ell+1}$ (where $\vec b_0=\vec b$).  The case $i=0$
is trivial. Suppose that the statement is true for $i$ with $0\leq
i\leq p-2$.  Then \eqref{evenmoreusefulequation} and induction
implies
\[\vec b_{i+1}\alpha_1 = (\vec b_i\alpha_1){}_{\vec a}\wh f\leq_{\R}
\vec b_i\alpha_1 = x_{\ell+1}.\]  Since $\vec
b_{i+1}\alpha_1\nless_{\J} \vec b_i\alpha_1$ (by
Subclaim~\ref{secondclaim}), we must in fact have $\vec
b_{i+1}\alpha_1\R x_{\ell+1}$. Putting together $(\vec
b_i\alpha_1\cdot \vec a)\wh f = (\vec b_i\alpha_1){}_{\vec a}\wh
f\cdot \vec a\wh f$ with \eqref{bigequationnew} and
\eqref{evenmoreusefulequation} yields
\[(x_{\ell+1},x_{\ell},\ldots,x_1)\wh f = (\vec
b_{i+1}\alpha_1,x_{\ell},y_{\ell-1},\ldots,y_1).\]  Since
$x_{\ell+1}\R \vec b_{i+1}\alpha_1$, the Zeiger property
(Lemma~\ref{zeigerprop}) yields $b_{i+1}\alpha_1 = x_{\ell+1}$.
 This completes the induction
that $\vec b_i\alpha_1=x_{\ell+1}$ for all $i$ and thereby finishes
the proof of Claim~\ref{keyclaim}. Theorem~\ref{isaperiodic} is now
proved.
\end{proof}

\section{Blowup operators}
  Fix a finite semigroup $T$, a set of primes $\pi$ and set $S=\CP$.
The salient idea underlying the remainder of the proof, is to
construct a retraction \mbox{$\wh B:\flagc\to \flagca$} belonging to
$\bigwrf$. One then ``conjugates'' the action of the generators of
the Rhodes expansion on $\flag$ by this retract to get an action on
$\flaga$ belonging to $\suppera$.

 Let us write $s\leq_{\H} t$ if both
$s\leq_{\L} t$ and $s\leq_{\R} t$.

\begin{Def}[Blowup operator]\label{blowupoperator}
A \emph{preblowup operator} on $S$ is a function $\Hop:S\to S$
satisfying the following properties:
\begin{enumerate}
\item $s\Hop = s$ if $s$ is $\pi'$-free;
\item $s\Hop<_{\H} s$ if $s$ is not $\pi'$-free;
\item $s\subseteq s\Hop$ (the ``blow up'');
\item There exists a function $m:S\to S$, written $s\mapsto m_s$, such that
$s\Hop = sm_s$ and $m_s=m_{s'}$ whenever $s\L s'$.
\end{enumerate}
An idempotent preblowup operator is called a \emph{blowup operator}.
\end{Def}

The element $m_s$ is called the \emph{right multiplier} associated
to $s$.

\begin{Lemma}\label{preblowupisemigroup}
The collection of preblowup operators on $S$ is a finite semigroup.
In particular, if there are any preblowup operators on $S$, then
there is a blowup operator on $S$.
\end{Lemma}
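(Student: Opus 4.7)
The plan is to verify that the four conditions defining a preblowup operator are preserved under composition. Once we know composition stays inside the set of preblowup operators, finiteness is automatic (everything lives in the finite monoid of self-maps of $S$), and then an idempotent power of any preblowup operator furnishes a blowup operator.

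Let $B_1, B_2$ be preblowup operators, with multipliers $m^{(1)}$ and $m^{(2)}$ respectively. First I would observe that conditions (1) and (2) together imply $sB\leq_{\H}s$ for every $s\in S$, with equality exactly when $s$ is $\pi'$-free. Using this: if $s$ is $\pi'$-free, then $sB_1=s$ and $sB_1B_2=sB_2=s$; if $s$ is not $\pi'$-free, then $sB_1<_{\H}s$, and applying $B_2$ gives $sB_1B_2\leq_{\H}sB_1<_{\H}s$. So conditions (1) and (2) are preserved. Condition (3) is trivial: $s\subseteq sB_1\subseteq sB_1B_2$.

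The main step is condition (4). Define $m_s=m^{(1)}_s\,m^{(2)}_{sB_1}$, so that $sB_1B_2=(sm^{(1)}_s)m^{(2)}_{sB_1}=sm_s$. I must show $m_s=m_{s'}$ whenever $s\L s'$. By hypothesis on $B_1$, $m^{(1)}_s=m^{(1)}_{s'}$, so it suffices to check $m^{(2)}_{sB_1}=m^{(2)}_{s'B_1}$, i.e.\ that $sB_1\L s'B_1$. Write $s'=us$ and $s=vs'$ with $u,v\in S^I$; then
\[
s'B_1=s'm^{(1)}_{s'}=us\,m^{(1)}_s=u(sB_1),\qquad sB_1=sm^{(1)}_s=vs'm^{(1)}_{s'}=v(s'B_1),
\]
so indeed $sB_1\L s'B_1$, and the $\L$-invariance of $m^{(2)}$ supplies the required equality. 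This closes the semigroup step. Since $S$ is finite, so is $S^S$, hence so is the semigroup of preblowup operators.

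Finally, the existence part: if the semigroup of preblowup operators is nonempty, pick any $B$ in it. Its idempotent power $B^{\omega}$ is also a preblowup operator (we just proved closure under composition, hence under taking powers), and it is idempotent by construction, so it is a blowup operator. The only step I expect to require care is checking the $\L$-invariance of the composed multiplier, but as indicated above it reduces cleanly to verifying $sB_1\L s'B_1$, which falls out of property (4) for $B_1$.
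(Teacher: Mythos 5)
Your proof is correct and follows essentially the same route as the paper: verify closure of conditions (1)--(4) under composition (the paper dismisses (1)--(3) as obvious and you spell them out), compute the multiplier of the composite as $m_s=m^{(1)}_s m^{(2)}_{sB_1}$, and reduce $\L$-invariance of the new multiplier to the fact that $s\L s'$ forces $sB_1\L s'B_1$ — which the paper obtains directly from $\L$ being a right congruence while you re-derive it from divisibility. Finiteness and the idempotent-power step are handled identically.
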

\begin{proof}
The first three conditions are obviously closed under composition.
If $\Hop$ and $\Hop'$ are preblowup operators with respective right
multipliers \mbox{$s\mapsto m_s$} and $s\mapsto n_s$, then
$s\Hop\Hop' = sm_sn_{sm_s}$.  If $s\L s'$, then $m_s=m_{s'}$ and
$sm_s\L s'm_s$. Therefore, $n_{sm_s} = n_{s'm_s}=n_{s'm_{s'}}$. This
shows that $\Hop\Hop'$ is a preblowup operator with $m_sn_{sm_s}$ as
the right multiplier associated to $s$. The final statement follows
from the existence of idempotents in non-empty finite semigroups.
\end{proof}

The next proposition collects some elementary properties of blowup
operators.  For the first item, the reader should consult
Definition~\ref{definechecks}.
\begin{Prop}\label{blowupprops}
Let $\Hop:S\to S$ be a blowup operator. Then:
\begin{enumerate}
\item $\Hop\in \check S$;
\item The image of $\Hop$ is the set of $\pi'$-free elements of $S$;
\item Suppose $y\leq_{\L} s$. Then $y\subseteq ym_s$;
\item If $s$ is $\pi'$-free and $y\leq_{\L} s$, then $y=ym_s$
\end{enumerate}
\end{Prop}
\begin{proof}
First we check $B\in \check S$.  Since $s\Hop = sm_s$, clearly
$s\Hop \leq_{\R} s$. If $s\L t$, then $m_s=m_t$ and so $sB = sm_s\L
tm_s=tm_t=tB$. If $s\Hop \R s$, then by the first and second items
in the definition of a blowup operator, $s$ is a  $\pi'$-free and
$s\Hop = s = sI$, so we may take $s_{\Hop} =I$ in the third item of
Definition~\ref{definechecks}.

For the second item, observe that $\Hop$ fixes an element $s$ if and
only if $s$ is $\pi'$-free. Since $\Hop$ is idempotent, it image is
its fixed-point set.   Turning to the third item, write $y = zs$
with $z\in S^I$. Then we have
\[y=zs\subseteq z(s\Hop) = zsm_s = ym_s,\] as required.  For the
final item, we have $s=s\Hop=sm_s$.  Since $y=zs$, some $z\in S^I$,
we have $ym_s = zsm_s =zs=y$.  This completes the proof.
\end{proof}

For the rest of this section we assume the existence of a blowup
operator $\Hop$ on $S$; a construction appears in the next section.
We proceed to define an ``extension'' $\wh\Hop$ of $\Hop$ to
$\flagc$. Recall that $\Delta_{s}$ is the diagonal operator in
$\bigwr$ corresponding to $s$ \eqref{diagonal}.

\begin{Def}[$\wh\Hop$]\label{extendblowup}
Define $\wh\Hop:S^*\to S^*$ recursively by
\begin{itemize}
\item $\varepsilon\wh\Hop=\varepsilon$
\item $(\vec b\cdot s)\wh\Hop = (\vec b\Delta_{m_s})\wh\Hop\cdot s\Hop$.
\end{itemize}
\end{Def}
This recursive definition is known as the Henckell formula.

Since $s\Hop\leq_{\L}s$, we obtain the following lemma.
\begin{Lemma}\label{firstdropsinL}
If $\vec x\neq \varepsilon$, then $\vec x\wh\Hop\alpha_1\leq_{\L}
\vec x\alpha_1$.
\end{Lemma}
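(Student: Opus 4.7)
The plan is to simply unwind the recursive definition of $\wh\Hop$ in Definition~\ref{extendblowup} and read off the first (i.e., rightmost) letter.

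Write $\vec x = \vec b\cdot s$ where $s = \vec x\alpha_1$ is the rightmost letter; this is legitimate because $\vec x\neq\varepsilon$, and $\vec b$ may be empty. By the recursive clause,
\[
\vec x\wh\Hop \;=\; (\vec b\cdot s)\wh\Hop \;=\; (\vec b\Delta_{m_s})\wh\Hop\cdot s\Hop.
\]
Since the right-hand side is a concatenation whose rightmost entry is the single letter $s\Hop$, we conclude $\vec x\wh\Hop\alpha_1 = s\Hop$. (In the degenerate case $\vec b=\varepsilon$, we have $\vec x\wh\Hop = \varepsilon\wh\Hop\cdot s\Hop = s\Hop$, giving the same conclusion.)

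By the fourth item of Definition~\ref{blowupoperator}, $s\Hop = sm_s$, so $s\Hop\leq_{\L} s$. Therefore
\[
\vec x\wh\Hop\alpha_1 \;=\; s\Hop \;\leq_{\L}\; s \;=\; \vec x\alpha_1,
\]
as required.

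There is essentially no obstacle here: the lemma is a one-line unwrapping of the recursion, and the only substantive fact used is the $\L$-comparison $s\Hop\leq_{\L} s$ that is built into the definition of a blowup operator. The slight care needed is just to handle the base case $\vec b = \varepsilon$ uniformly with the inductive clause, which the recursion already does.
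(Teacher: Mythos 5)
Your overall route — unwind the recursion once, read off that $\vec x\wh\Hop\alpha_1 = s\Hop$ where $s = \vec x\alpha_1$, then apply $s\Hop \leq_{\L} s$ — is exactly the paper's (implicit) argument; the paper simply states "Since $s\Hop\leq_{\L}s$, we obtain the following lemma."

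However, your justification of the key inequality is wrong. You write: "By the fourth item of Definition~\ref{blowupoperator}, $s\Hop = sm_s$, so $s\Hop\leq_{\L} s$." The identity $s\Hop = sm_s$ is a \emph{right} multiplication, so it only gives $s\Hop \leq_{\R} s$; it does not give $s\Hop \leq_{\L} s$. (Indeed, the proof of Proposition~\ref{blowupprops} in the paper uses exactly $s\Hop = sm_s$ to deduce $s\Hop\leq_{\R}s$, not $\leq_{\L}s$.) The correct source of $s\Hop \leq_{\L} s$ is the \emph{first two} items of Definition~\ref{blowupoperator}: either $s$ is $\pi'$-free and $s\Hop = s$, or $s$ is not $\pi'$-free and $s\Hop <_{\H} s$, which entails $s\Hop \leq_{\L} s$ (and $\leq_{\R} s$). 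Replacing your citation of item~(4) with items~(1) and~(2) repairs the proof; everything else, including the careful handling of the base case $\vec b = \varepsilon$, is fine.
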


We retain the notation from the previous section for the next
proposition.
\begin{Prop}\label{belongstobigwrf}
The map $\wh\Hop$ belongs to $\bigwrf$.  Moreover,
$\flagc\wh\Hop=\flagca$ and $\wh\Hop|_{\flagc}$ is idempotent.
\end{Prop}
\begin{proof}
Since $\Hop\in \check S$ by Proposition~\ref{blowupprops} and
$\Delta_{m_s}\in \bigwr$, it is immediate from the recursive
definition that $\wh\Hop\in \bigwr$. Next we verify that
$\flagc\wh\Hop\subseteq \flagca$ by induction on length. The base
case is trivial.  In general, $(\vec b\cdot x)\wh\Hop = (\vec
b\Delta_{m_x})\wh\Hop\cdot x\Hop$.  Since $\Delta_{m_x}$ preserves
$\flagc$, by induction $(\vec b\Delta_{m_x})\wh\Hop\in \flagca$.
Since $x\Hop$ is $\pi'$-free (Proposition~\ref{blowupprops}), if
$\vec b=\varepsilon$ we are done. Otherwise, let $x_1$ be the first
entry of $\vec b$. Then Lemma~\ref{firstdropsinL} shows that $(\vec
b\Delta_{m_x})\wh\Hop\alpha_1\leq_{\L} x_1m_x$.  As $x\Hop = xm_x$
and $x_1\leq_{\L} x$, we see that $x_1m_x\leq_{\L} xm_x$ and so
$(\vec b\Delta_{m_x})\wh\Hop\cdot x\Hop$ belongs to $\flagca$.

We show by induction on length that $\wh\Hop$ fixes $\flagca$, the
case of length $0$ being trivial.  If $\vec x=\vec b\cdot
x\in\flagca$, then $\vec x\wh\Hop = (\vec b\Delta_{m_x})\wh\Hop\cdot
x\Hop$.  But Proposition~\ref{blowupprops}, together with the fact
that $x$ is $\pi'$-free and $\vec x$ is an $\L$-chain, implies
$xB=x$ and $\vec b\Delta_{m_x}=\vec b$. So a simple induction yields
$\vec x\wh\Hop =\vec x$.  We conclude $\wh\Hop|_{\flagc}$ is
idempotent.

Finally, we must verify $\wh\Hop\in \bigwrf$. We proceed by
induction on length, the cases of length $0$ and $1$ being vacuously
true. Suppose $(x_2,x_1)\wh\Hop =(y_2,y_1)$ with $x_1\R y_1$ and
$x_2\R y_2$.  Then $y_1 = x_1\Hop = x_1m_{x_1}$.  On the other hand,
$x_2\R y_2=(x_2m_{x_1})\Hop\leq_{\R} x_2m_{x_1}\leq_{\R} x_2$.  Thus
$x_2m_{x_1}\R (x_2m_{x_1})\Hop$ and so $x_2m_{x_1}$ is $\pi'$-free.
Therefore $y_2=(x_2m_{x_1})\Hop=x_2m_{x_1}$.  Thus $y_1=x_1m_{x_1}$
and $y_2=x_2m_{x_2}$, showing that the condition in
Definition~\ref{defbigwrf} is satisfied. Suppose now $n>2$ and that
$(x_n,x_{n-1},\ldots,x_1)\wh\Hop = (y_n,y_{n-1},\ldots,y_1)$ with
$x_i\R y_i$ for $i=n-1,n$.  Then
\[(y_n,y_{n-1},\ldots,y_1)=((x_n,x_{n-1},\ldots,x_2)\Delta_{m_{x_1}})\wh\Hop\cdot
x_1\Hop.\]  Now $x_i\geq_{\R} x_im_{x_1}\geq_{\R} y_i\R x_i$, for
$i=n-1,n$.  Therefore, $x_im_{x_1}\R y_i$, $i=n-1,n$.  Induction
provides $s'\in S^I$ with $x_im_{x_1}s' = y_i$, $i=n-1,n$. Taking
$s=m_{x_1}s'$ yields $x_is=y_i$, for $i=n-1,n$, completing the
proof.
\end{proof}

Another crucial property of $\wh\Hop$ is that it ``blows up
$\L$-chains''.
\begin{Prop}\label{blowupstring}
Let $\vec x=(x_n,\ldots,x_1)\in \flagc$ and set $\vec x\wh\Hop =
(y_n,\ldots,y_1)$.  Then $x_i\subseteq y_i$ for $i=1,\ldots,n$.
\end{Prop}
\begin{proof}
The proof is by induction on $n$.  For $n=0$, the statement is
vacuously true.  In general, $\vec x\wh\Hop =
(x_nm_{x_1},\ldots,x_2m_{x_1})\wh\Hop\cdot x_1\Hop$.  By the
definition of a blowup operator $x_1\subseteq x_1\Hop$.  Since $\vec
x$ is an $\L$-chain, Proposition~\ref{blowupprops} shows
$x_i\subseteq x_im_{x_1}$ for $i=2,\ldots, n$.  Induction yields
$x_im_{x_1}\subseteq y_i$ for $i=2,\ldots,n$, establishing that
$x_i\subseteq y_i$.
\end{proof}

Recall that if $s\in S$, then $(\Delta_s,s)$ denotes the element of
$\mathscr C$ that acts by $\vec x(\Delta_s,s) = \vec x\Delta_s\cdot
s$.

\begin{Prop}\label{commutewithred}
The equalities $\red(\Delta_s,s)\red = (\Delta_s,s)\red$ and
$\red\wh\Hop\red =\wh\Hop\red$ hold.
\end{Prop}
\begin{proof}
Consider first $(\Delta_s,s)$.  Suppose $(x_n,\ldots,x_1)\in \flagc$
with $x_{i+1}\L x_i$.  Then $(x_n,\ldots,x_1)(\Delta_s,s) =
(x_ns,\cdots, x_1s,s)$ has $x_{i+1}s\L x_is$.  It follows that
applying first the elementary reduction $(x_{i+1},x_i)\to x_{i+1}$
and then $(\Delta_s,s)$ is the same as applying first $(\Delta_s,s)$
and then the elementary reduction \mbox{$(x_{i+1}s,x_is)\to
x_{i+1}s$}. We conclude $\red(\Delta_s,s)\red = (\Delta_s,s)\red$.

Let us now turn to $\wh\Hop\red$.  We show by induction on $i$ that
if a string \mbox{$\vec x=(x_n,\ldots,x_1)$} with $n>i$ admits an
elementary reduction $(x_{i+1},x_i)\to x_{i+1}$ and $\vec x\wh\Hop =
(y_n,\ldots,y_1)$, then $(y_{i+1},y_i)\to y_{i+1}$ is an elementary
reduction.  The base case is $i=1$, i.e.\ $x_1\L x_2$.  Then
\[(x_n,\ldots,x_2,x_1)\wh\Hop =
((x_n,\ldots,x_3)\Delta_{m_{x_1}m_{x_2m_{x_1}}})\wh\Hop\cdot\left((x_2m_{x_1})\Hop,
x_1m_{x_1}\right)\] By the definition of a blowup operator, $x_1\L
x_2$ implies that $m_{x_1}=m_{x_2}$.  Now $y_1=x_1m_{x_1}$.  Since
$x_2m_{x_1}=x_2m_{x_2} = x_2\Hop$ and $\Hop$ is idempotent, we see
that $y_2 = x_2m_{x_1}$. Now $x_2\L x_1$ implies $x_2m_{x_1}\L
x_1m_{x_1}$.  We conclude $y_2\L y_1$, as required. If $i>1$, then
we use that
\[(y_n,\ldots,y_1)=(x_nm_{x_1},\ldots,x_2m_{x_1})\wh\Hop\cdot
x_1\Hop.\]  Since $x_{i+1}m_{x_1}\L x_im_{x_1}$, the induction
hypothesis gives  $(y_{i+1},y_i)\to y_{i+1}$ is an elementary
reduction.  This completes the induction.  It is then immediate that
$\red\wh\Hop\red = \wh\Hop\red$.
\end{proof}

For the next proposition, the reader is referred to
Definition~\ref{CA}.

\begin{Prop}\label{inCA}
If $s\in S$, then $(\Delta_s,s)\wh\Hop\in \mathscr C^{\pi}$.
\end{Prop}
\begin{proof}
We saw in the proof of Proposition~\ref{CAissemi} that the set of
transformations $f$ satisfying $\red f\red =f\red$ is a semigroup.
As $\wh\Hop:\flagc\to \flagca$ (Proposition~\ref{belongstobigwrf}),
it suffices by Proposition~\ref{commutewithred}, to show that
$(\Delta_s,s)\wh\Hop\in \mathscr C$.  Both $(\Delta_s,s)$ and
$\wh\Hop$ leave $\flagc$ invariant. Now we obtain
\[\vec x(\Delta_s,s)\wh\Hop = (\vec x\Delta_s\cdot s)\wh\Hop = (\vec
x\Delta_s\Delta_{m_s})\wh\Hop\cdot s\Hop=(\vec
x\Delta_{sm_s})\wh\Hop\cdot s\Hop\] and $\Delta_{sm_s}\wh\Hop\in
\bigwrf$ by Proposition~\ref{belongstobigwrf}.  This establishes
$(\Delta_s,s)\wh\Hop\in \mathscr C$, completing the proof.
\end{proof}

\begin{Rmk}
Let $S$ be any finite monoid and $\Hop:S\to S$ any idempotent
operator satisfying (1), (2) and (4) of
Definition~\ref{blowupoperator}. Then one can define $\wh\Hop:S^*\to
S^*$ as per Definition~\ref{extendblowup} and Proposition~\ref{inCA}
will still hold.  Condition (3) is just used for
Proposition~\ref{blowupstring} and to construct the relational
morphism below.
\end{Rmk}

If $(X,M)$ and $(Y,N)$ are faithful transformation semigroups, then
a \emph{relational morphism} $\p:(X,M)\to (Y,N)$ is a fully defined
relation $\p:X\to Y$ such that, for each $m\in M$, there exists
$\til m\in N$ such that $y\pinv m\subseteq y\til m\pinv$ for all
$y\in Y$. If $\p:(X,M)\to (Y,N)$ is a relational morphism of
faithful transformation semigroups, then the \emph{companion
relation} $\til\p:M\to N$ is defined by $m\til\p = \{n\in N\mid
y\pinv m\subseteq yn\pinv, \forall y\in Y\}$.  It is well known that
$\til\p$ is a relational morphism~\cite{Eilenberg}.

We define a relational morphism of faithful transformation
semigroups $\p:(T^I,T)\to (\flaga,\suppera)$ as follows: we set $I\p
= \varepsilon$, while, for $t\in T$, we define $t\p = \{\vec x\in
\flaga \mid t\in \vec x\omega\}$.  Notice that $\pinv$ coincides
with $\omega$ on non-empty strings.

\begin{Lemma}\label{definetherelmorph}
The relation $\p:T^I\to \flaga$ gives rise to a relational morphism
$\p:(T^I,T)\to (\flaga,\suppera)$.
\end{Lemma}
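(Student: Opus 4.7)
The plan is to first verify full-definedness of $\p$, then for each $t\in T$ construct a companion $\til t\in\suppera$ and check the inclusion $\vec y\pinv\cdot t \subseteq (\vec y\til t)\pinv$ for every $\vec y\in\flaga$.

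For full-definedness, $I\p=\varepsilon\in\flaga$; and for $t\in T$ the singleton $\{t\}$ belongs to $\CP$, so the blowup operator (to be constructed in the next section) produces $\{t\}\Hop\in\CP$, which is $\pi'$-free by Proposition~\ref{blowupprops} and still contains $t$ by item~(3) of Definition~\ref{blowupoperator}. The length-one string $(\{t\}\Hop)$ therefore lies in $t\p\cap\flaga$. One will also use that, since $I\notin T$, unraveling the definition of $\pinv$ yields $\varepsilon\pinv=\{I\}$ while $\vec y\pinv=\vec y\omega$ for every non-empty $\vec y\in\flaga$.

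For each $t\in T$, set $s=\{t\}\in\CP$ and take $\til t$ to be the image in $\suppera$ of the element $(\Delta_s,s)\wh\Hop\in\mathscr C^\pi$ provided by Proposition~\ref{inCA}, so that $\vec y\til t=\vec y(\Delta_s,s)\wh\Hop\red$. When $\vec y=\varepsilon$, this reduces to $\varepsilon\til t=(s\Hop)$, giving $(\varepsilon\til t)\pinv=s\Hop\supseteq s=\{t\}=\varepsilon\pinv\cdot t$. When $\vec y=(y_n,\ldots,y_1)\neq\varepsilon$, one computes $\vec y(\Delta_s,s)=(y_ns,\ldots,y_1s,s)$; Proposition~\ref{blowupstring} will then ensure that the top letter of $\vec y(\Delta_s,s)\wh\Hop$ contains $y_ns$, while Lemma~\ref{reductionmap}(1) shows $\red$ preserves this top. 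Since the discussion after Lemma~\ref{omegaRdrops} guarantees $\vec y\til t\neq\varepsilon$, one concludes
\[
(\vec y\til t)\pinv=(\vec y\til t)\omega\supseteq y_ns=\vec y\omega\cdot t=\vec y\pinv\cdot t,
\]
as required.

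The only potential obstacle is to check that $\wh\Hop$ followed by $\red$ does not destroy the crucial top entry $y_ns$ carrying the information needed for the inclusion; this is handled immediately by Proposition~\ref{blowupstring} paired with Lemma~\ref{reductionmap}(1), so the argument reduces to straightforward bookkeeping once $(\Delta_s,s)\wh\Hop$ is identified as the natural companion of $t$.
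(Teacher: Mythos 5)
Your proposal is correct and follows essentially the same route as the paper: the same companion $\til t = (\Delta_{\{t\}},\{t\})\wh\Hop\red$, the same case split on $\vec y = \varepsilon$ versus $\vec y \neq \varepsilon$, and the same use of Proposition~\ref{blowupstring} together with Lemma~\ref{reductionmap}(1) to push the containment through $\wh\Hop$ and $\red$.
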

\begin{proof}
Since $t\in \{t\}\subseteq \{t\}\Hop$ and $\{t\}\Hop\in \flaga$, it
follows that $\p$ is fully defined.  Let $t\in T$.  We set $\til t =
(\Delta_{\{t\}},\{t\})\wh\Hop\red$.  Proposition~\ref{inCA} shows
 $\til t\in \suppera$.   We need to prove $\vec x\p\inv
t\subseteq \vec x\til t\p\inv$. If $\vec x=\varepsilon$, then $\vec
x(\Delta_{\{t\}},\{t\})\wh\Hop\red = \{t\}\Hop$. So
$\varepsilon\pinv t=\{I\}t=\{t\} \subseteq \{t\}\Hop=\varepsilon
\til t\pinv$.

If $\vec x\neq\varepsilon$, then we need to show $\vec x\omega
t\subseteq \vec x\til t\omega$. Abusing notation, we identify
$\{t\}$ with $t$. Then we have $\vec x(\Delta_t,t)\wh\Hop\red =
\left((\vec x\Delta_t\cdot t)\wh\Hop\right)\red$.
Lemma~\ref{reductionmap} tells us $\left((\vec x\Delta_t\cdot
t)\wh\Hop\right)\red\omega = (\vec x\Delta_t\cdot t)\wh\Hop\omega$.
An application of Proposition~\ref{blowupstring} yields \[(\vec
x\Delta_t\cdot t)\wh\Hop\omega\supseteq (\vec x\Delta_t\cdot
t)\omega = \vec x\omega t,\] completing the proof.
\end{proof}

\begin{Prop}\label{blowupcomputes}
The companion relation $\til\p:T\to \suppera$ satisfies the
inequality $f\til\p\inv\subseteq \varepsilon f\omega\in \CP$.
\end{Prop}
\begin{proof}
Let $t\in f\til\p\inv$.  Then $t = It \in \varepsilon\pinv
t\subseteq \varepsilon f\pinv = \varepsilon f\omega\in \CP$, as
$\varepsilon f\neq \varepsilon$.  Hence $f\til\p\inv\subseteq
\varepsilon f\omega$, as required.
\end{proof}

\begin{Cor}\label{almostdone}
If $\CP$ admits a blowup operator, then Theorem~\ref{henckellmain}
holds.  That is, $\pl {\barGpi} T = \{X\subseteq T\mid X\subseteq
Y\in \CP\}$.
\end{Cor}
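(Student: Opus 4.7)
The plan is to prove the equality by two inclusions, of which only one requires serious input from what has been developed.

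For the containment $\{X\subseteq T\mid X\subseteq Y\in \CP\}\subseteq \pl{\barGpi}T$, I would observe that $\pl{\barGpi}T$ is a subsemigroup of $P(T)$ that contains all singletons, and by Proposition~\ref{closedunderomega+star} is closed under $Z\mapsto Z^{\omega+\ast}$ whenever $Z$ generates a cyclic $\pi'$-group. By minimality of $\CP$ among subsemigroups of $P(T)$ with these closure properties, $\CP\subseteq \pl{\barGpi}T$. Since the collection of pointlikes is a downset for inclusion, any $X\subseteq Y\in \CP$ is itself $\barGpi$-pointlike.

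For the reverse containment, let $X\in \pl{\barGpi}T$. The hypothesis that $\CP$ admits a blowup operator activates the entire construction of the previous section: via Definition~\ref{extendblowup} and Propositions~\ref{belongstobigwrf}, \ref{inCA}, Lemma~\ref{definetherelmorph} produces a relational morphism $\p:(T^I,T)\to (\flaga, \suppera)$ whose companion $\til\p:T\to \suppera$ is a relational morphism of semigroups. By Theorem~\ref{isaperiodic}, $\suppera\in \barGpi$. Applying the definition of $\barGpi$-pointlike to $\til\p$ yields some $f\in \suppera$ with $X\subseteq f\til\p\inv$. Finally, Proposition~\ref{blowupcomputes} gives $f\til\p\inv\subseteq \varepsilon f\omega\in \CP$, so taking $Y=\varepsilon f\omega$ exhibits the required element of $\CP$ containing $X$.

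The main obstacle is not in the corollary itself, which is a formal assembly of pieces already in place; the real work has been pushed into (i)~Theorem~\ref{isaperiodic}, showing that $\suppera$ lies in $\barGpi$, and (ii)~the deferred construction of a blowup operator on $\CP$, which will be carried out in the final section. Once the blowup operator is in hand, the argument above is essentially bookkeeping, consisting only of feeding the hypothesis $X\in \pl{\barGpi}T$ into the companion relation $\til\p$ and reading off the containment guaranteed by Proposition~\ref{blowupcomputes}.
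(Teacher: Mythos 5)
Your proposal is correct and follows the same route as the paper: the easy inclusion via Proposition~\ref{closedunderomega+star}, minimality of $\CP$, and the downset property of pointlikes; the hard inclusion by invoking $\suppera\in\barGpi$ from Theorem~\ref{isaperiodic}, applying the definition of pointlike to the companion relation $\til\p$, and closing with Proposition~\ref{blowupcomputes}. The paper states it more tersely but the decomposition is identical.
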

\begin{proof}
We already know $\CP\subseteq \mathsf {PL}_{\barGpi}(T)$. Since
$\suppera$ is $\pi'$-free by Theorem~\ref{isaperiodic}, we have that
each $\barGpi$-pointlike set is contained in $f\til\p\inv$ for some
$f\in \suppera$.  An application of Proposition~\ref{blowupcomputes}
then completes the proof.
\end{proof}

\section{Construction of the blowup operator}
We continue to work with our fixed finite semigroup $T$ and to
denote $\CP$ by $S$.  Our task now consists of constructing a blowup
operator for $S$.  By Lemma~\ref{preblowupisemigroup}, it suffices
to construct a preblowup operator.  Our approach is a variation on
Henckell's~\cite{Henckell}, which leads to a shorter proof.   For
this purpose, we need to use  Sch\"utzenberger groups.  We retain
the notation for Sch\"utzenberger groups introduced in
Section~\ref{schutzsec}.

For each non-$\pi'$-free $\L$-class $L$, fix an $\H$-class $H_L$ of
$L$ and a prime power order element $g_L\in \til \Gamma_R(H_L)$
representing an element of $\Gamma_R(H_L)$ of prime order $p\in
\pi'$
 (c.f.~Lemma~\ref{primelift}).
We are now prepared to define our preblowup operator $\Hop$. If
$s\in S$ is a $\pi'$-free element, define $m_s=I$. If $s$ is not
$\pi'$-free, define $m_s=g_{L_s}^{\omega+\ast}\in S$. Notice that
$g_{L_s}^{\omega+\ast} = \bigcup_{n\geq 1} g_{L_s}^n\supset g_{L_s}$
since $g_{L_s}$ is a group element.  Define an operator $\Hop:S\to
S$ by $s\Hop = sm_s$.

\begin{Prop}
The operator $\Hop$ is a preblowup operator.
\end{Prop}
\begin{proof}
If $s$ is $\pi'$-free, then $m_s=I$ and $s\Hop = sm_s=s$.  The
fourth item of Definition~\ref{blowupoperator} is clearly satisfied
by construction. We turn now to the third item.  If $s$ is
$\pi'$-free, then trivially $s\subseteq s\Hop$.  If $s$ is not
$\pi'$-free, then since $g_{L_s}\in \til\Gamma_R(H_s)$ by
Proposition~\ref{L-classindep}, we have $sg_{L_s}^{\omega} = s$.  As
$g_{L_s}^{\omega}\subseteq g_{L_s}^{\omega+\ast}$,
\[s=sg_{L_s}^{\omega}\subseteq sg_{L_s}^{\omega+\ast} =
sm_s=s\Hop,\] as required.  Finally we turn to the second item of
Definition~\ref{blowupoperator}.  Suppose that $s\in S$ is not
$\pi'$-free.  It is immediate from the definition  $s\Hop =
sm_s\leq_{\R} s$. Let $\gamma:\Gamma_R(H_s)\to \Gamma_L(H_s)$ be the
anti-isomorphism given by $sg = g\gamma s$ for $g\in \Gamma_R(H_s)$.
Choose, using Lemma~\ref{primelift}, an element $x\in \til
\Gamma_L(H_s)$ of order a power of $p$ so that $x$ maps to
$g_{L_s}\gamma$ in $\Gamma_L(H_s)$ (where we view $g_{L_s}$ as an
element of $\Gamma_R(H_s)$ using Proposition~\ref{L-classindep} and
the projection). Then we have $\bigcup_{n\geq
1}x^n=x^{\omega+\ast}\in S$. We calculate $s\Hop$ as follows:
\begin{align*}
s\Hop = sg_{L_s}^{\omega+\ast} = s\bigcup_{n\geq 1} g_{L_s}^n =
\bigcup_{n\geq 1}sg_{L_s}^n  = \bigcup_{n\geq 1}(g_{L_s}\gamma)^ns =
\bigcup_{n\geq 1}x^ns = x^{\omega+\ast}s.
\end{align*}
We conclude $s\Hop\leq_{\L} s$ and thus $s\Hop\leq_{\H} s$. To
establish  $s\Hop<_{\H} s$,  observe, using \eqref{eq:omega+*},
$s\Hop g_{L_s} =sg_{L_s}^{\omega+\ast}g_{L_s} =
sg_{L_s}^{\omega+\ast} = s\Hop$. Since $g_{L_s}$ represents a
non-trivial element of $\Gamma_R(H_s)$
(c.f.~Proposition~\ref{L-classindep}) and $(H_s,\Gamma_R(H_s))$ is a
regular permutation group, we deduce that $s\Hop\notin H_s$. This
concludes the proof that $s\Hop <_{\H} s$ when $s$ is not
$\pi'$-free. Therefore, $\Hop$ is a preblowup operator, as required.
\end{proof}

In light of Corollary~\ref{almostdone}, we have now established
Theorem~\ref{henckellmain}.

\begin{Rmk}
The first and second authors believe that one can make this whole
approach work without blowing up null elements.
\end{Rmk}

\bibliographystyle{abbrv}
\bibliography{standard}
\end{document}